\documentclass{article}
\usepackage[english]{babel}
\usepackage[utf8x]{inputenc}
\usepackage[T1]{fontenc}
\usepackage{amsmath,amssymb,amsthm,mathtools,mathdots,nicefrac,tikz}

\usepackage[a4paper,top=3cm,bottom=2cm,left=3cm,right=3cm,marginparwidth=1.75cm]{geometry}

\usepackage{amsmath}
\usepackage{ dsfont }
\usepackage{amsthm}
\usepackage{amssymb}
\usepackage{graphicx}
\usepackage{caption}
\usepackage[colorlinks=true, allcolors=blue]{hyperref}
\usepackage{subcaption}

\theoremstyle{definition} 
\newtheorem{theorem}{Theorem}[section]
\newtheorem{construction}[theorem]{Construction}
\newtheorem{corollary}[theorem]{Corollary}
\newtheorem{lemma}[theorem]{Lemma}
\newtheorem{proposition}[theorem]{Proposition}

\newtheorem{conjecture}[theorem]{Conjecture}

\newcommand{\Z}{\mathbb{Z}}

\DeclareMathOperator{\Div}{Div}

\DeclareMathOperator{\dgon}{dgon}
\DeclareMathOperator{\sdgon}{sdgon}

\begin{document}

\title{Computing higher graph gonality is hard}
\author{Ralph Morrison and Lucas Tolley}
\date{}

\maketitle

\begin{abstract}

In the theory of divisors on multigraphs, the $r^{th}$ divisorial gonality of a graph is the minimum degree of a rank \(r\) divisor on that graph.  It was proved by Gijswijt et al. that the first divisorial gonality of a finite graph is NP-hard to compute.  We generalize their argument to prove that it is NP-hard to compute the \(r^{th}\) divisorial gonality of a finite graph for all \(r\).  We use this result to prove that it is NP-hard to compute  \(r^{th}\) stable divisorial gonality for a finite graph, and to compute  \(r^{th}\) divisorial gonality for a metric graph. We also prove these problems are APX-hard, and we study the NP-completeness of these problems.

\end{abstract}

\section{Introduction}
\label{section:introduction}

Divisor theory on graphs provides a combinatorial analog of divisor theory on algebraic curves.  This was introduced on finite graphs in \cite{bn} through the lens of chip-firing games, and was extended to metric graphs in \cite{gk,mz} through the language of tropical rational functions.  A direct link between divisor theory on graphs and divisor theory on curves was established in \cite{MB}, allowing for algebro-geometric results to be proved through purely combinatorial means, as done in \cite{cdpr}.

Of particular interest have been various analogs of the gonality of an algebraic curve.  This can be defined as the minimum degree of a positive rank divisor on the curve, and equals the minimum degree of a map from the curve to a line.  For finite or metric graphs, the \emph{divisorial gonality} is the minimum degree of a positive rank divisor.  In the case of finite graphs, one can also study the \emph{stable divisorial gonality}, the minimum divisorial gonality of any subdivision of the graph.  It was proved in \cite{GSW} that divisorial gonality and stable divisorial gonality are both NP-hard to compute for finite graphs. The same result was proved for metric graphs in \cite{small2020}; as mentioned in \cite[Remark 3.6]{DSW}, this result can also be deduced from arguments in \cite{GSW}.

In this paper we consider the computational complexity of higher gonalities of graphs.  For a finite or a metric graph, the \emph{\(r^{th}\) divisorial gonality} is the minimum degree of a divisor of rank \(r\); and for a finite graph, the \emph{\(r^{th}\) stable divisorial gonality} is the minimum \(r^{th}\) divisorial gonality of any subdivision of the graph.

Throughout, the problems we consider take a graph and an integer \(k\) as input, and asks whether the relevant type of \(r^{th}\) gonality is bounded by \(k\). Our first main result is the following, which we obtain by adapting the construction from \cite[\S 3]{GSW}.

\begin{theorem}\label{theorem:r_hard_finite}
     For any positive integer \(r\), the \textsc{\(r^{th}\) Divisorial Gonality} problem is NP-hard.
\end{theorem}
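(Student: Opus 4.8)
The plan is to give, for each fixed \(r\), a polynomial-time reduction from the first divisorial gonality problem (NP-hard by \cite{GSW}) to the \textsc{\(r^{th}\) Divisorial Gonality} problem. Concretely, starting from the graph \(G\) and threshold \(k\) produced by the construction of \cite[\S 3]{GSW}---so that the underlying NP-hard instance is a yes-instance precisely when \(G\) carries a rank-\(1\) divisor of degree at most \(k\)---I would build a new graph \(G_r\), together with a new threshold \(k_r\) computable in polynomial time, with the property that \(\dgon_r(G_r) \le k_r\) if and only if \(\dgon_1(G) \le k\). The whole difficulty is therefore concentrated in a rank-boosting modification that raises the target rank from \(1\) to \(r\) while changing the minimal degree by a controlled, explicitly computable amount.

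For the rank-boosting step I would attach to \(G\) a gadget \(\Gamma_r\)---my first candidate being \(r-1\) new ``universal'' vertices joined to all of \(V(G)\), or equivalently a suitable clique-like block adjacent to \(G\)---to obtain \(G_r\), and prove a clean identity of the shape \(\dgon_r(G_r) = \dgon_1(G) + c_r\), where \(c_r\) depends only on \(r\) and on global parameters of \(G\) (such as \(|V(G)|\)) and is computable directly. The intuition is that each added layer of the gadget contributes one unit of forced rank together with a fixed block of chips that cannot be traded against the chips needed on \(G\) itself; iterating \(r-1\) times takes rank \(1\) to rank \(r\). I would establish the identity by induction on \(r\), isolating a single ``add one layer'' lemma that raises both the attainable rank and the minimal degree by exactly the predicted amounts.

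The upper bound \(\dgon_r(G_r) \le \dgon_1(G) + c_r\) should be the routine direction: given a rank-\(1\) divisor \(D\) on \(G\) of minimal degree, I would write down an explicit effective divisor on \(G_r\) obtained by adding a fixed configuration of chips on \(\Gamma_r\), and verify that it has rank at least \(r\) by playing the rank game---showing that any effective divisor of degree \(r\) can be defeated by first using the gadget chips to absorb \(r-1\) of the requested chips and then invoking rank \(1\) of \(D\) on the remaining one. The lower bound \(\dgon_r(G_r) \ge \dgon_1(G) + c_r\) is the crux: I must rule out any divisor of smaller degree that achieves rank \(r\) by cleverly borrowing chips across the boundary between \(G\) and \(\Gamma_r\). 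Here I would pass to \(v\)-reduced divisors and run Dhar's burning algorithm with a fire source placed inside the gadget, using the high connectivity introduced by \(\Gamma_r\) to force any rank-\(r\) divisor to leave a prescribed number of chips on the gadget and to restrict to a rank-\(1\) divisor on \(G\), whence its degree is at least \(\dgon_1(G) + c_r\).

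The main obstacle I anticipate is exactly this lower bound---certifying that the gadget cannot be exploited to cheat. Controlling linear equivalence across the \(G\)--\(\Gamma_r\) interface, and proving that the ``forced'' gadget chips really are forced for every divisor class of rank \(r\), is the delicate part; getting the connectivity of \(\Gamma_r\) high enough (via enough parallel edges or universal adjacencies) to make Dhar's algorithm behave, without inflating the construction beyond polynomial size, is where the care must go. Since \(r\) is a fixed constant, the size of \(\Gamma_r\) and the computation of \(c_r\) and \(k_r\) are automatically polynomial in the input, so once the identity \(\dgon_r(G_r) = \dgon_1(G) + c_r\) is proved the NP-hardness of \textsc{\(r^{th}\) Divisorial Gonality} follows immediately.
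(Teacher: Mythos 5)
Your reduction runs from \textsc{First Divisorial Gonality} rather than, as the paper does, directly from \textsc{Independent Set}; that is a legitimate strategy in principle, but your entire proof lives in the one claim you never establish: the existence of a gadget \(\Gamma_r\) satisfying the exact identity \(\dgon_r(G_r)=\dgon_1(G)+c_r\). You correctly flag the lower bound as the crux, but what you offer for it---reduced divisors, Dhar's algorithm with a source in the gadget, ``high connectivity''---is a list of tools, not an argument, and there is a structural reason to doubt it can be made to work with the gadget you propose. Rank does not localize: a rank-\(r\) divisor on \(G_r\) need not split as (a rank-\(1\) divisor on \(G\)) plus (a forced configuration on \(\Gamma_r\)); the whole subtlety of gonality sequences is that higher-rank divisors can be strictly more efficient than such sums, which is why \(\dgon_r(G)< r\cdot\dgon_1(G)\) for many graphs. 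Universal vertices make this worse, not better: a vertex \(u\) adjacent to all of \(V(G)\) deposits, when fired, a chip on every vertex of \(G\) simultaneously, so chips parked on the gadget actively help clear debt inside \(G\)---the very connectivity you want in order to control Dhar's algorithm is what lets a cheap divisor borrow across the interface. Even your ``routine'' direction is not routine for this gadget: with one chip on each of \(r-1\) universal vertices, debt of \(r\) placed on a single universal vertex cannot be cleared unless the divisor already has chips on essentially every vertex of \(G\), so the upper bound forces a larger chip placement, which in turn changes the constant \(c_r\) you must then match exactly in the lower bound.

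The paper sidesteps all of this by never relating \(\dgon_r\) to \(\dgon_1\). It generalizes the construction of \cite{GSW}, building \(G_r'\) with edge multiplicities tuned to \(r\) (\(M\) parallel edges to force \(D\)-stopping cuts, \(r+2\) edges in the \(\{T_v,v'\}\) gadgets, \(r\) edges in the \(\{e_v,e_u\}\) gadgets), and proves the exact formula \(\dgon_r(G_r')=r+(3r+1)|V|+(2r-1)|E|-\alpha(G)\) via the \(D\)-stopping-edge machinery; NP-hardness then follows from NP-hardness of \textsc{Independent Set}. To salvage your plan you would need to actually prove an additive identity for a concretely specified \(\Gamma_r\)---and the universal-vertex candidate looks unlikely to admit one---or else abandon rank-boosting and, like the paper, re-engineer the known hardness gadget so that \(r\) enters the edge multiplicities and the chip-counting lemmas directly.
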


As in \cite{GSW}, our construction behaves nicely under subdivisions, allowing us to obtain the following result.

\begin{theorem}\label{theorem:r_hard_stable}
    For any positive integer \(r\), the \textsc{\(r^{th}\) Stable Divisorial Gonality}  problem is NP-hard.
\end{theorem}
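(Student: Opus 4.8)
The plan is to show that the very same reduction proving Theorem \ref{theorem:r_hard_finite} also witnesses NP-hardness of \textsc{\(r^{th}\) Stable Divisorial Gonality}, with no change to the instance or to the threshold \(k\). Write \(G\) for the graph produced by that reduction on a given input. One inclusion is automatic: since \(G\) is a subdivision of itself, \(\sdgon_r(G) \le \dgon_r(G)\), so whenever the input is a yes-instance and \(\dgon_r(G) \le k\), we immediately get \(\sdgon_r(G) \le k\). Thus the yes-direction of the reduction transfers for free, and all of the work is in the lower bound.

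The substantive direction is to show that for a no-instance we have \(\dgon_r(G') > k\) for \emph{every} subdivision \(G'\) of \(G\) simultaneously, not merely for \(G\) itself. First I would isolate exactly which features of \(G\) the degree bound in the proof of Theorem \ref{theorem:r_hard_finite} relied on: the arrangement of the gadgets, the cut vertices and edge cuts that act as bottlenecks, and the resulting constraints on small-degree effective divisors obtained via Dhar's burning algorithm. The aim is to verify that all of these are combinatorial features of the underlying topology that survive subdivision, so that the same lower-bound computation can be replayed on an arbitrary \(G'\). This is precisely the sense in which the construction ``behaves nicely under subdivisions,'' and it should have been arranged in Theorem \ref{theorem:r_hard_finite} with this in mind.

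The main obstacle is controlling the extra freedom that subdivision vertices provide: a clever rank-\(r\) divisor on \(G'\) might place chips on interior points of subdivided edges, potentially evading the bottleneck arguments tailored to \(V(G)\). To handle this I would argue that chips on subdivision vertices can be pushed, via chip-firing moves, onto the original vertices of \(G\) without increasing the degree and without lowering the rank, so that a minimum-degree rank-\(r\) divisor on \(G'\) may be taken to be supported on (the images of) \(V(G)\). Concretely, I expect to choose a minimum \(r^{th}\)-gonality divisor on \(G'\) that is \(q\)-reduced for a suitable \(q \in V(G)\), and then use reducedness together with the gadget structure to confine its support and bound its degree below by the same quantity used for \(\dgon_r(G)\). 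This would give \(\sdgon_r(G) = \dgon_r(G)\) for the constructed graphs, after which the reduction from Theorem \ref{theorem:r_hard_finite} applies verbatim and yields the claimed NP-hardness.
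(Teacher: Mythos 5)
Your high-level plan coincides with the paper's: run the reduction of Theorem \ref{theorem:r_hard_finite} unchanged and prove \(\sdgon_r(G_r')=\dgon_r(G_r')\) by showing \(\dgon_r(G_r'')\geq \dgon_r(G_r')\) for an arbitrary subdivision \(G_r''\). However, your mechanism for that lower bound contains a genuine gap. The claim that a minimum-degree rank-\(r\) divisor on a subdivision ``may be taken to be supported on (the images of) \(V(G_r')\),'' by pushing chips off subdivision vertices via chip-firing without increasing degree or lowering rank, is false in general: chip-firing moves preserve the linear-equivalence class, and a class on a subdivision need not contain any divisor supported on the original vertices. For instance, subdivide each edge of a triangle once to get a \(6\)-cycle; a single chip on a subdivision vertex is not equivalent to any divisor supported on the three original vertices, and its \(q\)-reduced representative (for \(q\) an original vertex) still sits on the subdivision vertex, so reducedness does not rescue the argument. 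More fundamentally, the phenomenon you are trying to wave away is exactly what makes stable gonality delicate: by \cite{DSW}, subdivision can strictly decrease \(r^{th}\) gonality for some graphs, so any support-confinement claim must exploit specific structure of \(G_r'\), and you give no such argument beyond an appeal to ``the gadget structure.'' (Even granting confinement, you would still need to know that a divisor supported on original vertices has the same rank on \(G_r''\) as on \(G_r'\), which requires a further citation or proof.)

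The paper sidesteps this entirely: it never confines supports. Instead, it notes that \(\deg(D)<M\), so the \(M\)-parallel edges of \(G_r'\) become \(D\)-stopping paths in \(G_r''\); one may assume every stopping path is \emph{clean} (at most one chip on its interior, achieved by firing subpaths); and then Lemma \ref{lemma:rth_sgon_dstopping} --- an inequality version of Lemma \ref{lemma:rth_gon_components} for the components obtained by deleting stopping paths --- allows the component-by-component accounting from Lemma \ref{lemma:rth_gon_alpha} to be replayed on \(G_r''\) even when chips lie on subdivision vertices, yielding \(\deg(D)\geq r+(3r+1)|V|+(2r-1)|E|-\alpha(G)=\dgon_r(G_r')\). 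To repair your proof you would need to either prove an accounting lemma of this kind yourself or actually establish the support-confinement claim for \(G_r'\) specifically; as written, the crucial step is asserted rather than proved.
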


From here, we utilize \cite[Theorem 1.3]{DSW} to develop our result for metric graphs.  This answers in the affirmative an open problem posed in \cite[Remark 3.6]{DSW}.

\begin{theorem}\label{theorem:r_hard_metric}
  For any positive integer \(r\), the \textsc{\(r^{th}\) Metric Divisorial Gonality} problem is NP-hard.
\end{theorem}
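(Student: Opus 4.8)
The plan is to reduce from the \textsc{\(r^{th}\) Stable Divisorial Gonality} problem, whose NP-hardness is established in Theorem~\ref{theorem:r_hard_stable}, using the bridge between stable gonality of a finite graph and divisorial gonality of an associated metric graph supplied by \cite[Theorem 1.3]{DSW}. The idea is that Theorem~\ref{theorem:r_hard_stable} already does the combinatorial heavy lifting: its construction turns an instance of an NP-hard problem into a finite graph \(G\) together with an integer \(k\) such that \(\sdgon_r(G)\le k\) holds exactly when the original instance is a yes-instance. What remains is to transfer this dichotomy to the metric setting essentially for free.

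First I would pin down the precise content of \cite[Theorem 1.3]{DSW}, which identifies the \(r^{th}\) stable divisorial gonality of a finite graph \(G\) with the \(r^{th}\) divisorial gonality of the metric graph \(\Gamma\) naturally associated to \(G\) (giving each edge length one). I would confirm that this identification is available at the level of rank-\(r\) divisors, not merely for \(r=1\); this is exactly why the metric question was left open in \cite[Remark 3.6]{DSW}, the only missing ingredient being the finite rank-\(r\) hardness now provided by Theorems~\ref{theorem:r_hard_finite} and~\ref{theorem:r_hard_stable}.

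Next I would assemble the reduction. Given an instance of the NP-hard source problem, apply the construction underlying Theorem~\ref{theorem:r_hard_stable} to obtain \((G,k)\), and output the pair \((\Gamma,k)\), where \(\Gamma\) is the unit-length metric model of \(G\). Passing from \(G\) to \(\Gamma\) costs only linear time, and since all edge lengths equal \(1\) the encoding stays polynomial, so the whole map is a polynomial-time reduction. By \cite[Theorem 1.3]{DSW} we have \(\dgon_r(\Gamma)=\sdgon_r(G)\), whence ``\(\dgon_r(\Gamma)\le k\)'' holds if and only if ``\(\sdgon_r(G)\le k\)'' holds, which in turn holds if and only if the source instance is a yes-instance. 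This chain of equivalences exhibits a polynomial-time many-one reduction from the NP-hard source problem to \textsc{\(r^{th}\) Metric Divisorial Gonality}, establishing NP-hardness of the latter.

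The main obstacle I anticipate is not the reduction, which is immediate once the bridge is in place, but verifying that \cite[Theorem 1.3]{DSW} truly yields the needed exact equality for every \(r\) and for the specific graphs our construction produces. In particular I would check that no feature of the gadget graphs from Theorem~\ref{theorem:r_hard_stable} (such as parallel edges, or the placement of the divisors certifying rank) violates the hypotheses of \cite[Theorem 1.3]{DSW}, and that the metric gonality is genuinely attained on a finite subdivision, so that the correspondence is a two-sided equality rather than an inequality in only one direction.
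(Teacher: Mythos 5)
Your overall route---push the gadget graph from Theorem \ref{theorem:r_hard_stable} into the metric setting via \cite[Theorem 1.3]{DSW}---is the same as the paper's, but the pivotal step as you state it is not justified, and you flag it yourself without closing it. The identity you invoke, \(\dgon_r(\Gamma(G)) = \sdgon_r(G)\), is not what \cite[Theorem 1.3]{DSW} says. That theorem gives \(\dgon_r(\Gamma(G)) = \min_{k\in\Z_{>0}} \dgon_r(\sigma_k(G))\), a minimum over \emph{uniform} subdivisions only, whereas \(\sdgon_r(G)\) is the minimum over \emph{all} subdivisions. The only consequence available for an arbitrary finite graph is therefore the sandwich of Proposition \ref{proposition:dsw_corollary}: \(\sdgon_r(G) \leq \dgon_r(\Gamma(G)) \leq \dgon_r(G)\). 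With just the left-hand inequality, your reduction is sound in one direction (\(\dgon_r(\Gamma)\leq k\) implies \(\sdgon_r(G)\leq k\)) but fails in the other: a yes-instance of the stable problem need not map to a yes-instance of the metric problem, so the claimed chain of equivalences breaks.

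The missing idea---and the paper's fix---is to exploit the \emph{right-hand} inequality as well, which becomes useful because of a special property of the gadget: the proof of Theorem \ref{theorem:r_hard_stable} shows that for the constructed graph \(G'_r\) one has \(\sdgon_r(G'_r) = \dgon_r(G'_r)\), both equal to \(r + (3r+1)|V| + (2r-1)|E| - \alpha(G)\) by Lemma \ref{lemma:rth_gon_alpha}. Plugging \(G'_r\) into the sandwich pins \(\dgon_r(\Gamma(G'_r))\) between two equal numbers, so all three gonalities coincide on the gadget. Consequently the reduction should be run from \textsc{Independent Set} (as in Theorems \ref{theorem:r_hard_finite} and \ref{theorem:r_hard_stable}) rather than abstractly from the stable problem: given \(G\) and a target \(s\), output \(\bigl(\Gamma(G'_r),\, r + (3r+1)|V| + (2r-1)|E| - s\bigr)\); then the metric gonality bound holds if and only if \(\alpha(G)\geq s\), with both directions valid. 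Your closing paragraph correctly identifies that a ``two-sided equality'' must be verified, but the verification is precisely this coincidence of finite and stable gonality on the gadget graphs, which your write-up does not supply.
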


Our paper is organized as follows.  In Section \ref{section:background} we present necessary background on divisor theory on graphs.  In Section \ref{section:higher_finite} we prove that \(r^{th}\) gonality is NP-hard on finite graphs.  This is extended to \(r^{th}\) stable gonality in Section \ref{section:higher_stable}, and to metric graphs in Section \ref{section:higher_metric}.  We prove results regarding APX-hardness and NP-completeness in Section \ref{section:complexity}

\medskip

\noindent \textbf{Acknowledgements.} The authors thank Professor Pamela Harris for suggestions and comments on an early draft of these results.  The authors were supported by NSF Grant DMS-2011743.

\section{Divisor theory on graphs}
\label{section:background}

In this paper we deal with both finite graphs and metric graphs; if unspecified, a graph is taken to be finite.  Our finite graphs are connected multigraphs \(G=(V,E)\), where \(V=V(G)\) is a finite vertex set and \(E=E(G)\) is a finite edge multiset.  Note that we allow multiple edges connecting a pair of vertices, but never an edge from a vertex to itself; we denote the collection of edges connecting \(v,w\in V(G)\) by \(E(v,w)\). The \emph{valence} \(\textrm{val}(v)\) of a vertex \(v\in V(G)\) is the number of edges incident to \(v\).  We say that a graph \(H\) is a \emph{subdivision} of a graph \(G\) if \(H\) can be obtained by iteratively introducing \(2\)-valent vertices in the middle of the edges of \(G\).  We say a set \(S\subset V(G)\) is an \emph{independent set} if no two elements of \(S\) are connected by an edge in \(G\).  The \emph{independence number of \(G\)}, denoted \(\alpha(G)\), is the largest possible size of an independent set for that graph.  Given a subset \(S\subset V(G)\), we let \(G[S]\) denote the \emph{subgraph induced  by \(S\)}, whose vertex set is \(S\) and whose edge set is the subset of \(E\) with both endpoints in \(S\). 

The \emph{divisor group of \(G\)}, denoted \(\Div(G)\), is the free Abelian group on the vertex set \(V(G)\).  An element \(D\in \Div(G)\) is called a \emph{divisor}, and can be written
\[D=\sum_{v\in V(G)}D(v)\cdot v,\]
where \(D(v)\in\mathbb{Z}\). The \emph{degree of \(D\)} is the sum of its coefficients, i.e. \(\deg(D)=\sum_{v\in V(G)}D(v)\).  We intuitively think of a divisor as a placement of (possibly negative) integer numbers of poker chips on the vertices of the graph, so that the degree is the total number of chips. If \(D(v)\geq 0\) for all \(v\in V(G)\), we say that \(D\) is \emph{effective}.  If \(D(v)<0\) for some vertex \(v\), we say that $v$ is \emph{in debt}.

Order the vertices of \(G\) as \(v_1,\ldots,v_n\), and let \(L\) be the \emph{Laplacian} of \(G\), i.e. the \(n\times n\) matrix with diagonal entries \(L_{ii}=\textrm{val}(v_i)\) and off-diagonal entries \(L_{ij}=-|E(v_i,v_j)|\). Treating elements of \(\Div(G)\) as integer-valued vectors, we say two divisors \(D,D'\in \Div(G)\) are \emph{equivalent} if \(D'=D-L\sigma\) for some \(\sigma\in\mathbb{Z}^{|V(G)|}\).  We then write \(D\sim D'\).

This equivalence can be phrased more intuitively in the language of \emph{chip-firing games}.  Given a divisor \(D\), we transform it into a new divisor by ``firing'' a vertex \(v\), which moves chips from \(v\) to its neighbors (one along each edge).  Then, \(D\sim D'\) if and only if we can obtain \(D'\) from \(D\) via a sequence of chip-firing moves; in particular, if  \(D'=D-L\sigma\), then \(\sigma\) encodes the number of times each vertex should be fired.

It is useful to think about firing multiple vertices simultaneously.  Given a subset \(U\subset V(G)\), we can transform a divisor \(D\) into an equivalent divisor \(D'\) by firing all the vertices in \(U\) simultaneously.  In Laplacian notation, we have \(D'=D-L\mathds{1}_U\), where \(\mathds{1}_U\) is the \(0\)-\(1\) vector with \(1\)'s corresponding to the elements of \(U\).  A key fact is that if \(D\sim D'\) with \(D\) and \(D'\) both effective, then there exists a sequence of subset-firing moves transforming \(D\) into \(D'\) such that every intermediate divisor is also effective \cite[Corollary 3.11]{db}.  In fact, slightly more is true, which we summarize with the following lemma.

\begin{lemma}\label{lemma:no_debt_increase}
    Suppose \(D,D'\in \Div(G)\), with \(D\sim D'\) and \(D'\) effective.  Then there exists a collection of subset-firing moves transforming \(D\) to \(D'\) such that no subset-firing move introduces debt, and no subset-firing move increases the debt on any vertex.
\end{lemma}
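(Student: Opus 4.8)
The plan is to avoid invoking \cite[Corollary 3.11]{db} as a black box and instead give a direct level-set argument, since this simultaneously reproves the effective case and yields the stronger debt-monotonicity. First I would reduce to a convenient firing vector: writing \(D' = D - L\sigma\) for some \(\sigma \in \Z^{|V(G)|}\) and using \(L\mathds{1} = 0\), I may replace \(\sigma\) by \(\sigma - (\min_v \sigma(v))\mathds{1}\) to assume \(\sigma \geq 0\) with \(\min_v \sigma(v) = 0\). Setting \(m = \max_v \sigma(v)\) and \(U_j = \{v : \sigma(v) \geq j\}\) for \(1 \leq j \leq m\) gives a nested chain of proper, nonempty subsets \(U_1 \supseteq U_2 \supseteq \cdots \supseteq U_m\) with \(\sigma = \sum_{j=1}^m \mathds{1}_{U_j}\). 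Firing these sets in \emph{any} order transforms \(D\) into \(D'\), since \(D - L\sum_{j} \mathds{1}_{U_j} = D - L\sigma = D'\); the content of the lemma is entirely in choosing the order.

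The key step is to fire the sets \emph{smallest first}, i.e. in the order \(U_m, U_{m-1}, \ldots, U_1\), and to observe that under this order the chip count at each vertex is \emph{unimodal} in time. Indeed, firing a set \(U\) can only decrease the count at vertices of \(U\) (each loses its number of edges leaving \(U\)) and can only increase it at vertices outside \(U\). A vertex \(v\) with \(\sigma(v) = k\) lies in \(U_j\) exactly when \(j \leq k\), so in the smallest-first order all of the steps in which \(v\) gains chips (firing \(U_j\) with \(j > k\)) strictly precede all of the steps in which \(v\) loses chips (firing \(U_j\) with \(j \leq k\)). Hence, writing \(D_t\) for the divisor after \(t\) firings, the count \(D_t(v)\) first weakly increases from \(D(v)\) up to some peak value and then weakly decreases down to \(D'(v)\).

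Combining unimodality with the hypothesis that \(D'\) is effective then finishes the argument. During the initial increasing phase the count never drops below \(D(v)\), so the debt at \(v\) can only decrease; during the final decreasing phase the count runs monotonically down to \(D'(v) \geq 0\) and hence stays \(\geq 0\) throughout, so no debt is present at all. Therefore the debt at every vertex is non-increasing across the whole sequence and no single move introduces debt, as required. I expect the main obstacle to be pinning down the correct ordering: the symmetric ``largest first'' order genuinely fails, as a short path example shows it can push an out-of-debt vertex into debt. The proof must therefore use both the nesting of the level sets and the effectivity of \(D'\) at precisely the right place, the latter exactly in guaranteeing nonnegativity throughout the decreasing phase.
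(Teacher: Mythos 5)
Your proposal is correct and takes essentially the same approach as the paper: both decompose the (translated, nonnegative) firing script into its nested level sets \(U_j=\{v : \sigma(v)\geq j\}\), fire them smallest-first, and conclude from the resulting unimodality of each vertex's chip count together with the effectivity of \(D'\). The only cosmetic difference is that the paper cites \cite[Lemma 2.3]{JDBG} for the fact that firing the level sets in this order realizes \(\sigma\), which you verify directly.
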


\begin{proof}
Consider a firing script \(\sigma\in \mathbb{Z}^{|V(G)|}\) transforming \(D\) to \(D'\).  Since \(\mathds{1}_{V(G)}\in \ker L\), we can scale \(\sigma\) by the all \(1\)'s vector, and so we may assume that \(\sigma(i)\geq 0\) for all \(i\), and that \(\sigma(i)=0\) for at least one \(i\). Let \(k=\max_i\sigma(i)\), and for \(1\leq \ell\leq k\) consider the sets
\[U_\ell=\{v_i\in V(G)\,|\, \sigma(i)\geq k+1-\ell\}.\]
That is, \(U_\ell\) is the set of vertices that are fired at least \(k+1-\ell\) times by \(\sigma\).  We then have that the sequence of subset-firing moves given by \(U_1,U_2,\ldots,U_k\) is the same firing script as \(\sigma\) \cite[Lemma 2.3]{JDBG}

Since \(U_i\subset U_{i+1}\) for all \(i\), once a vertex is fired, it is fired as part of every subsequent subset-firing move, and will never gain more chips. Since \(D'\) is effective, it follows that the only vertices that can be fired are those that are not in debt.  Thus no vertex ever has a debt that increases from a subset-firing move.  Similarly, no vertex can be newly put into debt in a subset-firing move, since it would never gain more chips after that.  This completes the proof.
\end{proof}

Given a divisor \(D\in \Div(G)\), we let \(|D|\) denote the set of all effective divisors equivalent to \(D\).  If \(|D|=\emptyset\), we say \emph{\(D\) has rank \(-1\)}, written \(r(D)=-1\).  Otherwise, we define the \emph{rank} of \(D\), written \(r(D)\), to be the maximum value \(r\) such that for all effective divisors \(E\) of degree \(r\) we have that \(|D-E|\neq \emptyset\).  In the language of chip-firing, the rank of a divisor is the maximum amount of additional debt the divisor can eliminate, regardless of where that debt is placed.  For \(r\geq 1\), we then define the \emph{\(r^{th}\) divisorial gonality} \(\dgon_r(G)\) to be the minimum degree of a rank \(r\) divisor on \(G\).  From there, the \emph{\(r^{th}\) stable divisorial gonality} \(\sdgon_r(G)\) is the minimum \(r^{th}\) divisorial gonality of any subdivision of \(G\).

We now briefly describe metric graphs, which will be considered in Section \ref{section:higher_metric}. A \emph{metric graph} $\Gamma$ is a topological space arising from a pair $(G,l)$, where $G$ is a finite graph and $l$ is a length function, assigning to each edge of $G$ a positive real number. 
Given a finite graph $G$, we can construct a metric graph $\Gamma(G)$ by assigning a length of 1 to each edge in $E(G)$. 

We can study divisor theory on metric graphs, where a divisor is now a \(\mathbb{Z}\)-linear combination of all the points on the metric graph, with the requirement that all but finitely many coefficients are zero; the notions of degree and effectiveness can be defined as before.  Equivalence of divisors is phrased in the language of \emph{tropical rational functions}, and leads us to notions of the rank of a divisor; we refer the reader to \cite{gk} for more details.
 For any \(r\geq 1\), the \(r^{th}\) divisorial gonality \(\dgon_r(\Gamma)\) is the minimum degree of a rank \(r\) divisor on a metric graph \(\Gamma\).

 For a finite graph \(G\), let $\sigma_k(G)$ be the $k^{th}$ uniform subdivision of $G$, obtained by replacing every edge in $G$ with a path consisting of \(k\) edges. In \cite{DSW}, the authors prove that the gonalities of $G$ and $\Gamma(G)$ need not be equal in general, but that  \[\textrm{dgon}_r(\Gamma(G)) = \min_{k\in\Z_{>0}} \textrm{dgon}_r (\sigma_k(G)).\] With this we can readily prove that, given a graph $G$, the \(r^{th}
 \) metric gonality of \(\Gamma(G)\) falls  between \(r^{th}
 \) divisorial gonality and \(r^{th}
 \) stable divisorial gonality of \(G\).

\begin{proposition}\label{proposition:dsw_corollary} For a  finite graph \(G\) and for \(r\geq 1\), we have
\[\textrm{sdgon}_r(G) \leq \textrm{dgon}_r(\Gamma(G)) \leq \textrm{dgon}_r(G).\]
\end{proposition}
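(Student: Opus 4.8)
The plan is to obtain both inequalities as immediate consequences of the DSW formula $\dgon_r(\Gamma(G)) = \min_{k \in \Z_{>0}} \dgon_r(\sigma_k(G))$ quoted above, since each bound amounts to comparing a minimum with one of its terms, or with a minimum taken over a larger family. No fresh chip-firing argument should be needed; the substance of the statement is already packaged inside the cited result.

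For the upper bound $\dgon_r(\Gamma(G)) \leq \dgon_r(G)$, I would first observe that $\sigma_1(G) = G$: the first uniform subdivision replaces each edge by a path consisting of a single edge, which leaves the graph unchanged. Hence $\dgon_r(G) = \dgon_r(\sigma_1(G))$ is one of the quantities appearing in the minimum on the right-hand side of the DSW formula, so the minimum is at most $\dgon_r(G)$.

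For the lower bound $\sdgon_r(G) \leq \dgon_r(\Gamma(G))$, I would note that for every $k \in \Z_{>0}$ the uniform subdivision $\sigma_k(G)$ is a subdivision of $G$ in the sense of Section~\ref{section:background}, being obtained by inserting $k-1$ two-valent vertices along the interior of each edge. Thus the family of uniform subdivisions is contained in the family of all subdivisions of $G$. Since $\sdgon_r(G)$ is by definition the minimum of $\dgon_r$ over all subdivisions, whereas $\dgon_r(\Gamma(G))$ is the minimum over only the uniform ones, passing to the minimum over the larger family can only weakly decrease the value, which yields $\sdgon_r(G) \leq \min_{k \in \Z_{>0}} \dgon_r(\sigma_k(G)) = \dgon_r(\Gamma(G))$.

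I do not anticipate a genuine obstacle. The only points requiring (routine) care are verifying that $\sigma_1(G) = G$ and that each $\sigma_k(G)$ qualifies as a subdivision under the paper's definition, both of which follow directly from unwinding definitions. All the real work is done by the DSW theorem, so the proof should be short.
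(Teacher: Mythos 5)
Your proposal is correct and matches the paper's proof essentially verbatim: both invoke the DSW formula $\dgon_r(\Gamma(G)) = \min_{k\in\Z_{>0}} \dgon_r(\sigma_k(G))$, obtain the upper bound from $G = \sigma_1(G)$, and obtain the lower bound because each $\sigma_k(G)$ is a subdivision of $G$, so the minimum defining $\sdgon_r(G)$ is over a larger family. No differences worth noting.
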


\begin{proof}
By \cite[Theorem 1.3]{DSW}, \(\textrm{dgon}_r(\Gamma(G)) = \min_{k\in\Z_{>0}} \textrm{dgon}_r (\sigma_k(G))\). Since each \(\sigma_k(G)\) is a subdivision of \(G\), we have \(\textrm{sdgon}_r(G)\leq \textrm{dgon}_r(\Gamma(G))\).  Since \(G=\sigma_1(G)\), we have \(\textrm{dgon}_r(\Gamma(G)) \leq \textrm{dgon}_r(G)\).
\end{proof}

\section{Higher gonality on finite graphs}
\label{section:higher_finite}

For any fixed \(r\geq 1\), we define the \textsc{\(r^{th}\) Divisorial Gonality} problem as follows.

\begin{itemize}
  \item[] \textsc{\(r^{th}\) Divisorial Gonality}
  \item[] \textbf{Input:} a graph \(G=(V,E)\) and an integer \(k\leq r|V|\).
  \item[] \textbf{Question:} is \(\dgon_r(G)\leq k\)?
\end{itemize}

       Given some finite graph $G$, we will create a new graph whose $r^{th}$ gonality is directly related to the independence number of $G$. Because the independence number of a graph is NP-hard to compute, it will follow that that $r^{th}$ gonality is NP-hard to compute.
    
    \begin{construction} \label{rth_construction}
    Given a graph $G=(V,E)$, construct $G_r'=(V',E')$ as follows. Begin with a single vertex $T$ in $G_r'$. For each vertex $v\in V$, create 3 vertices in $G_r': v, v'$ and $T_v$. Let $M = r(3|V| + 2|E| + 1) + 1$.  Between each $T$ and $T_v$, add $M$ parallel edges. Between each $v$ and $v'$, add $M$ parallel edges. Between $v'$ and $T_v$, add $r+2$ edges. For each edge $e(u,v) \in E$, create 2 vertices in $G_r': e_v$ and $e_u$. In $G_r'$, connect $v$ to $e_v$ with $M$ edges and connect $u$ to $e_u$ with $M$ edges, for each $v$ and $e_v$ pair. Between each pair of $e_v$ and $e_u$ with \(e(v,u)\in E(G)\), add $r$ edges.  
    \end{construction}
    
    \begin{figure}[hbt]
        \centering
        \includegraphics[scale = 0.65]{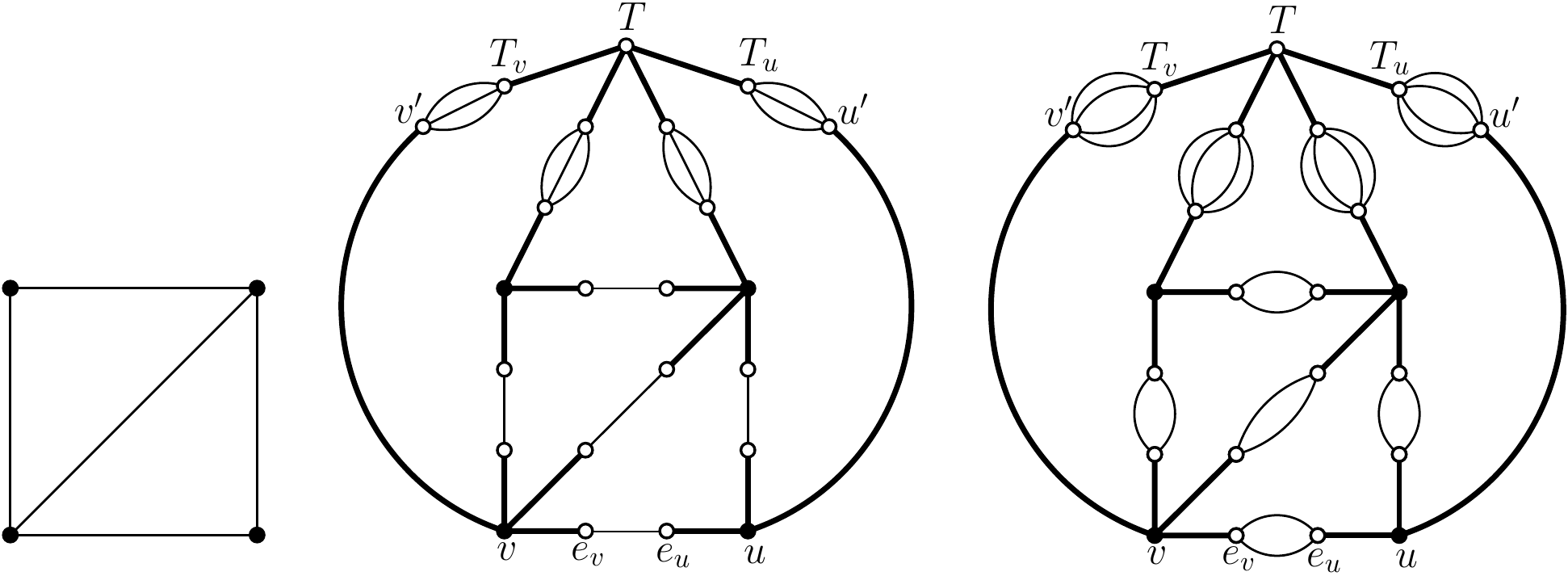}
        \caption{An example of $G$ and its corresponding $G_1'$ and $G_2'$. In each $G_r'$, bold edges represent $M$ parallel edges; note that \(M\) differs with varying \(r\).}
        \label{fig:rth_gon_construction}
    \end{figure}
    
    Note that $M$ was defined to be larger than $r$ times the number of vertices in $G_r'$. See Figure \ref{fig:rth_gon_construction} for an example how to construct $G_r'$. We remark that \(G_1'\) is precisely the graph \(\hat{G}\) constructed in \cite[\S 3]{GSW} to prove that first divisorial gonality is NP-hard to compute. Our analysis of \(G_r'\) will closely follow theirs.

    For a divisor $D$ on a graph, define the equivalence relation
    $$v \sim_D u \Leftrightarrow \sigma(v) = \sigma(u) \text{ for every } \sigma \in \mathbb{Z}^{|V|} \text{ for which } D-L\sigma \geq 0.$$
    An edge with two equivalent endpoints is called \emph{$D$-stopping} (this notion was originally called $D$-blocking in \cite{GSW}). Informally, this means that if in a chip-firing sequence one of the two endpoints is fired more times than the other one, then debt will be present somewhere in the graph.
    
    \medskip

    We now recall a few lemmas.
    
    \begin{lemma} \label{lemma:rth_gon_dstopping}
        (Lemma 3.1 in \cite{GSW}) Let $D \geq 0$ be a divisor on a graph $G$ and let $u, v \in V(G)$. Then $u \nsim_D v$ if and only if for some effective divisor $D' \sim D$ there exists a set \(U\subset V(G)\) with \(u\in U\), \(v\notin U\) such that \(D'-L\mathds{1}_U\geq 0\); that is, such that we can start with the divisor \(D'\) and subset-fire \(U\) without introducing debt. In particular, $u \sim_D v$ if every $u-v$ cut has more than $\deg(D)$ edges.
    \end{lemma}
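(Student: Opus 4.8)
The plan is to unwind the definition of the equivalence relation: $u \nsim_D v$ means there is a firing script $\sigma \in \mathbb{Z}^{|V(G)|}$ with $D - L\sigma \geq 0$ and $\sigma(u) \neq \sigma(v)$. The two sides of the claimed equivalence then differ only in that the definition permits an arbitrary integral script, while the right-hand condition insists on a \emph{single} subset-firing move from an effective divisor, with $u$ inside the fired set and $v$ outside. I would bridge this gap using the level-set decomposition of Lemma~\ref{lemma:no_debt_increase}, which converts any debt-free script between two effective divisors into a nested chain of subset-firing moves. The main obstacle I anticipate is that $\sim_D$ is symmetric in $u$ and $v$, whereas the subset-firing witness is \emph{asymmetric} (it demands $u \in U$, $v \notin U$); reconciling this requires an explicit reversal step rather than a mere appeal to symmetry.

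For the forward direction, suppose $\sigma$ witnesses $u \nsim_D v$. First I would normalize so that $u$ is fired strictly more than $v$. If $\sigma(u) > \sigma(v)$, keep $A = D$, $B = D - L\sigma$, and the script $\sigma$. If instead $\sigma(u) < \sigma(v)$, reverse: take $A = D - L\sigma$ (which is effective and equivalent to $D$), $B = D$, and the script $-\sigma$, which now satisfies $(-\sigma)(u) > (-\sigma)(v)$. Either way I obtain effective divisors $A \sim B \sim D$ and a script $\rho$ from $A$ to $B$ with $\rho(u) > \rho(v)$. Applying Lemma~\ref{lemma:no_debt_increase} to $A$ and $B$ yields nested sets $U_1 \subset \cdots \subset U_k$ whose successive firings never introduce debt; since normalizing adds a multiple of $\mathds{1}_{V(G)} \in \ker L$ and so preserves $\rho(u) - \rho(v)$, choosing a threshold between $\rho(v)$ and $\rho(u)$ selects a level set $U_\ell$ with $u \in U_\ell$, $v \notin U_\ell$. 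The divisor $D'$ reached just before firing $U_\ell$ is effective, equivalent to $D$, and satisfies $D' - L\mathds{1}_{U_\ell} \geq 0$, which is exactly the required pair.

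For the reverse direction, write $D' = D - L\pi$ and set $\sigma = \pi + \mathds{1}_U$, so that $D - L\sigma = D' - L\mathds{1}_U \geq 0$. Thus $\pi$ and $\sigma$ are both debt-free scripts for $D$, and because $u \in U$ and $v \notin U$ we have $\bigl(\sigma(u) - \sigma(v)\bigr) - \bigl(\pi(u) - \pi(v)\bigr) = 1$. These two integers cannot both vanish, so at least one of $\pi, \sigma$ takes different values at $u$ and $v$, witnessing $u \nsim_D v$. This off-by-one bookkeeping is the only delicate point here, and it resolves immediately once both candidate scripts are in hand.

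Finally, the ``in particular'' clause I would prove by contrapositive using the separating set just produced. Given $u \nsim_D v$, take $D'$ and $U$ as above and let $C$ be the set of edges between $U$ and $V(G) \setminus U$, which is a $u$-$v$ cut. Effectiveness of $D' - L\mathds{1}_U$ at each $w \in U$ forces $D'(w)$ to be at least the number of edges from $w$ leaving $U$, and summing over $U$ gives $|C| \leq \sum_{w \in U} D'(w) \leq \deg(D') = \deg(D)$, the last inequality using that $D'$ is effective. Hence some $u$-$v$ cut has at most $\deg(D)$ edges; contrapositively, if every $u$-$v$ cut has more than $\deg(D)$ edges then $u \sim_D v$.
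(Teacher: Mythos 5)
Your proof is correct: the reverse direction's off-by-one argument, the level-set extraction in the forward direction, and the cut-counting for the final clause all check out. Note that the paper itself gives no proof of this lemma (it is quoted from \cite{GSW}); your argument — normalize a distinguishing script, decompose it into the nested level sets underlying Lemma~\ref{lemma:no_debt_increase}, and pick the level set separating $u$ from $v$ — is essentially the standard proof from \cite{GSW}, so there is nothing to add.
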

    
\begin{corollary}\label{corollary:m_stop}
Let \(G'_r\) be constructed from \(G\) and \(r\) as in Construction \ref{rth_construction}, and let \(D\in \Div(G'_r)\) with \(r(D)=r\) and \(\deg(D)=\dgon_r(G'_r)\).  If \(x,y\in V(G'_r)\) with \(|E(x,y)|=M\), then \(x\sim_D y\).
\end{corollary}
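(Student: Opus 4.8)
The plan is to bound the degree of $D$ and then apply the cut characterization from the end of Lemma \ref{lemma:rth_gon_dstopping}. Since $D$ has rank $r$, its degree equals $\dgon_r(G'_r)$, which by the problem's input constraint and the fact that a rank $r$ divisor always exists is at most $r|V'|$. I would first record the explicit count $|V'| = 3|V| + 2|E| + 1$ by tallying the vertices introduced in Construction \ref{rth_construction}: the single vertex $T$, the three vertices $v, v', T_v$ for each $v \in V$, and the two vertices $e_v, e_u$ for each edge $e \in E$. Hence $\deg(D) \le r(3|V| + 2|E| + 1)$, and crucially $M = r(3|V| + 2|E| + 1) + 1 > \deg(D)$.

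Next I would invoke the ``in particular'' clause of Lemma \ref{lemma:rth_gon_dstopping}: to conclude $x \sim_D y$ it suffices to show that every $x$--$y$ cut has more than $\deg(D)$ edges. The $M$ parallel edges joining $x$ and $y$ themselves already form part of every $x$--$y$ cut, since any partition of $V(G'_r)$ separating $x$ from $y$ must cut all $M$ of these edges. Therefore every $x$--$y$ cut contains at least $M$ edges, and since $M > \deg(D)$, every such cut has strictly more than $\deg(D)$ edges. By the lemma this forces $x \sim_D y$, which is exactly the claim.

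The only genuinely substantive point is verifying that the degree bound $\deg(D) \le r|V'|$ actually holds for a minimum-degree rank $r$ divisor, and that $|V'|$ equals the claimed $3|V| + 2|E| + 1$. The degree bound follows because there is always an effective rank $r$ divisor of degree at most $r|V'|$ (for instance, placing $r$ chips on enough vertices to dominate any effective divisor of degree $r$, or more directly from the input constraint $k \le r|V|$ interpreted on $G'_r$), so $\dgon_r(G'_r) \le r|V'|$; the vertex count is a direct bookkeeping check against the construction. I expect the main obstacle, such as it is, to be stating the degree bound cleanly rather than proving anything deep: once $\deg(D) < M$ is established, the cut argument is immediate. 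I would therefore structure the proof as a one-line degree estimate followed by the observation that the $M$ shared edges dominate any cut, and a direct appeal to Lemma \ref{lemma:rth_gon_dstopping}.
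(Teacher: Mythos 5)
Your proposal is correct and follows essentially the same route as the paper: bound $\deg(D)=\dgon_r(G'_r)\leq r|V(G'_r)|<M$ by noting that placing $r$ chips on every vertex yields a rank-$r$ divisor (the domination argument you sketch), then apply the ``in particular'' clause of Lemma \ref{lemma:rth_gon_dstopping} since the $M$ parallel edges force every $x$--$y$ cut to have more than $\deg(D)$ edges. The only caveat is your parenthetical appeal to the input constraint $k\leq r|V|$, which is about the decision problem and not a valid justification for the degree bound; your primary argument via the all-$r$ divisor is the right one and is exactly what the paper uses.
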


\begin{proof}
Note that the divisor \(\sum_{v\in V(G)}rv\) has rank at least \(r\), which means that \(\deg(D)\leq r|V(G'_r)|\).  By construction, this is strictly smaller than \(M\).  It follows that every \(x-y\) cut has more than \(\deg(D)\) edges, so the result follows from Lemma \ref{lemma:rth_gon_dstopping}. 
\end{proof}

    \begin{lemma} \label{lemma:rth_gon_components}
        (Lemma 3.2 in \cite{GSW}) Let $D \geq 0$ be a divisor on $G = (V, E)$. Let $F$ be the set of $D$-stopping edges and let $U$ be a component of the subgraph $(V, E \setminus F)$. Then for every effective divisor $D' \sim D$ we have $\sum_{u\in U} D'(u) = \sum_{u\in U}D(u)$.
    \end{lemma}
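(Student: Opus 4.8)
The plan is to track the total number of chips on $U$ directly through a firing vector and show that the net flow across the boundary of $U$ vanishes. Fix an arbitrary effective divisor $D'\sim D$ and write $D'=D-L\sigma$ for some $\sigma\in\mathbb{Z}^{|V|}$. Then the difference in chip counts on $U$ is
\[
\sum_{u\in U} D'(u)-\sum_{u\in U} D(u) = -\sum_{u\in U}(L\sigma)(u),
\]
so it suffices to show that $\sum_{u\in U}(L\sigma)(u)=0$.

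First I would expand the Laplacian sum into a flow across edges. Using $(L\sigma)(u)=\sum_{w}|E(u,w)|\bigl(\sigma(u)-\sigma(w)\bigr)$, which follows from $\operatorname{val}(u)=\sum_w|E(u,w)|$, I sum over $u\in U$. Every edge with both endpoints in $U$ then contributes two terms of opposite sign (from the ordered pairs $(u,w)$ and $(w,u)$), so all such interior contributions cancel. What remains is the boundary flow
\[
\sum_{u\in U}(L\sigma)(u)=\sum_{\substack{u\in U,\ w\notin U}} |E(u,w)|\,\bigl(\sigma(u)-\sigma(w)\bigr),
\]
a sum over the edges leaving $U$.

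The key step is to observe that every such boundary edge is $D$-stopping. Indeed, $U$ is a connected component of $(V,E\setminus F)$, so any edge with one endpoint in $U$ and the other outside $U$ cannot lie in $E\setminus F$; hence it belongs to $F$, i.e. it is $D$-stopping. For a $D$-stopping edge $(u,w)$ we have $u\sim_D w$, which by definition forces $\sigma(u)=\sigma(w)$ for every $\sigma$ with $D-L\sigma\geq 0$. Since $D'=D-L\sigma$ is effective, our particular firing vector $\sigma$ is exactly such a $\sigma$, so $\sigma(u)=\sigma(w)$ on each boundary edge. Every summand above therefore vanishes, giving $\sum_{u\in U}(L\sigma)(u)=0$ and hence the claimed conservation.

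I expect the only delicate point to be the bookkeeping in the flow computation, namely verifying that the interior edges cancel in pairs and that the definition of $\sim_D$ genuinely applies to the specific firing vector $\sigma$ realizing $D'\sim D$. The conceptual content, however, is simply that $\sigma$ is constant across the $D$-stopping cut separating $U$ from the rest of the graph, so no net chips can cross that cut.
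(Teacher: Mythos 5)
Your proof is correct. Note that the paper itself does not prove this lemma at all --- it is recalled verbatim as Lemma 3.2 of \cite{GSW} --- so there is no in-paper argument to compare against; what you have written is a complete, self-contained proof of the cited result, and it is the natural one (essentially the conservation argument used in \cite{GSW}). Your two key steps are both sound: every edge joining $U$ to $V\setminus U$ must lie in $F$ by maximality of the component $U$, and the specific firing vector $\sigma$ with $D'=D-L\sigma\geq 0$ is one of the vectors quantified over in the definition of $\sim_D$, so $\sigma(u)=\sigma(w)$ across each boundary edge and the net flow $\sum_{u\in U,\ w\notin U}|E(u,w)|\bigl(\sigma(u)-\sigma(w)\bigr)$ vanishes, while interior edges cancel in pairs since $|E(u,w)|=|E(w,u)|$.
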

    
    If we have two components of $(V', E' \setminus F)$ that are connected only by $D$-stopping edges, then the chips on each component must stay on that component. This sets the framework for the following results.
    
    For the next three lemmas, assume we are given a graph \(G\) and some \(r\geq 1\), and that we construct $G'_r$ as in Construction \ref{rth_construction}.  Assume further that $D\in \Div(G)$ is an effective divisor of rank $r$ and degree \(\deg(D)=\dgon_r(G'_r)\). By Corollary \ref{corollary:m_stop}, we know that \(T\sim_D T_v\) and \(v\sim_D v'\) for all \(v\in V(G)\).
    
    \begin{lemma} \label{lemma:rth_gon_Tv}
        For each $\{T_v,v'\}$ subgraph of $G'_r$, the number of chips of \(D\) on the subgraph must satisfy:
        \[
            D(\{T_v,v'\}) \geq \begin{cases}
                2r & \text{ if } T \sim_D v\\
                2r + 1 & \text{ if } T \nsim_D v.
            \end{cases}
        \]
    \end{lemma}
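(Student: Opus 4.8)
The plan is to reduce to two cases and bound the conserved quantity $s := D(T_v) + D(v')$ in each. First I would record that $\sim_D$ is an equivalence relation, so from the standing hypotheses $T \sim_D T_v$ and $v \sim_D v'$ we obtain the dichotomy $T \sim_D v \iff T_v \sim_D v'$. I then read off the component structure of $(V', E' \setminus F)$ from Lemma \ref{lemma:rth_gon_components}: since the $M$-edge bundles $E(T,T_v)$ and $E(v,v')$ are $D$-stopping by Corollary \ref{corollary:m_stop}, in the case $T \sim_D v$ both $T_v$ and $v'$ are isolated vertices of $(V', E'\setminus F)$, whereas in the case $T \nsim_D v$ the $r+2$ edges $E(T_v,v')$ survive and $\{T_v,v'\}$ forms a single component. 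In either case $s$ is conserved across all effective divisors equivalent to $D$.

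The device I would use repeatedly is the following consequence of $r(D)=r$: for any effective $E$ with $\deg E \le r$ there is an effective $\hat D \sim D - E$, and then $\hat D + E \sim D$ is again effective, so Lemma \ref{lemma:rth_gon_components} applies to it. Thus $\hat D + E$ agrees with $D$ on every component of $(V', E' \setminus F)$. In the case $T \sim_D v$, I apply this with $E = r\,T_v$: since $T_v$ is its own component, $(\hat D + rT_v)(T_v) = D(T_v)$, and as $\hat D(T_v) \ge 0$ this forces $D(T_v) \ge r$. The symmetric choice $E = r\,v'$ gives $D(v') \ge r$, so $s = D(T_v) + D(v') \ge 2r$, as claimed.

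In the case $T \nsim_D v$ only the sum $s$ is conserved, so the device alone (say with $E=r\,v'$) yields merely $s \ge r$, and more is needed. Here I would invoke Lemma \ref{lemma:rth_gon_dstopping} for the pair $T_v \nsim_D v'$: there is an effective $D' \sim D$ and a set $U$ with $T_v \in U$, $v' \notin U$, and $D' - L\mathds{1}_U \ge 0$. Chasing the degree bound $\deg(D') = \deg(D) < M$ through this firing forces $T \in U$, and in turn every $T_w \in U$ (otherwise $T$ or some $T_w$ would have to send a full $M$-bundle and fall into debt), while $v \notin U$; hence the only chips $T_v$ loses are the $r+2$ sent to $v'$, giving $D'(T_v) \ge r+2$ and therefore $s \ge r+2$. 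For $r=1$ this already gives $s \ge 3 = 2r+1$, recovering the estimate of \cite{GSW}.

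The main obstacle is the remaining gap for $r \ge 2$. A single separating firing only certifies $r+2$ chips, and a gadget carrying exactly $s = r+2$ can, viewed in isolation, still absorb $r$ units of debt placed on either $T_v$ or $v'$ (its two effective configurations are essentially $(0,r+2)$ and $(r+2,0)$, each of which tolerates removing $r$ from the loaded vertex). Consequently, upgrading $r+2$ to $2r+1$ cannot be purely local; I expect to feed a carefully chosen effective degree-$r$ debt into the rank condition so that its resolution forces the block $\{T,T_v,\dots\}$ to fire with a sufficiently large differential relative to $\{v,v'\}$ across the $(r+2)$-edge bottleneck, and then combine the resulting inequality with the conservation of $s$ and the no-debt-increase structure of Lemma \ref{lemma:no_debt_increase}. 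This differential-firing estimate, rather than the component bookkeeping, is where I anticipate the real difficulty.
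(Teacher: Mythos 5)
Your treatment of the case \(T \sim_D v\) is correct and matches the paper's (the paper phrases the component-conservation argument informally; your ``device'' of adding the debt back to an effective representative makes it precise), and your bound \(D'(T_v) \geq r+2\) in the case \(T \nsim_D v\) is exactly the paper's first step in that case. But the proposal then stops: for \(r \geq 2\) you certify only \(D(\{T_v,v'\}) \geq r+2\), not \(2r+1\), and you defer the remaining gap to an unspecified ``differential-firing estimate.'' That gap is where the entire content of the lemma lies, so the proof is incomplete.

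Moreover, your diagnosis of the obstacle is mistaken: the missing step \emph{is} local, and your claim that a gadget with \(s = r+2\) ``tolerates'' any degree-\(r\) debt fails because you only tested debt concentrated on a single vertex. The paper's key move is to \emph{split} the debt. Suppose for contradiction \(s \leq 2r\), and replace \(D\) by the equivalent effective divisor in which \(T_v\) carries \(k \geq r+2\) chips (allowed, since rank and the conserved sum \(s\) are invariant under equivalence); then \(v'\) carries at most \(2r-k \leq r-2\) chips. Take \(E\) effective of degree \(r\) with \(E(v') = 2r-k+1\) and \(E(T_v) = k-r-1\). Then \(D - E\) has exactly \(-1\) on \(v'\) and \(r+1\) on \(T_v\). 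By Lemma \ref{lemma:no_debt_increase}, debt must be cleared by subset-firing moves that never create or increase debt; to decrease the debt on \(v'\), the fired set must contain \(T_v\) but neither \(v'\) nor \(v\) (firing \(v\) without \(v'\) sends \(M\) chips out of \(v\)), and \(T_v\) can never first gain chips (\(T\) cannot fire toward it without sending an \(M\)-bundle, and \(v'\) cannot fire while in debt). So \(T_v\) must send \(r+2\) chips while holding at most \(r+1\), a contradiction; hence \(s \geq 2r+1\). In particular your configuration \((r+2,0)\) fails against the split debt of \(r-1\) on \(T_v\) and \(1\) on \(v'\) whenever \(r \geq 2\), contrary to the claim in your final paragraph.
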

    
    \begin{proof} 
        If \(T\sim_D v\), then since we have $T \sim_D T_v$, and $v \sim_D v'$, we know from $T\sim_D v$ that we have $T_v\sim_D v'$. This means that no firing script can move chips from one of $T_v$ and $v'$ to the other without creating debt. Because $D$ has rank at least $r$, we claim that  $D(T_v) \geq r$ and $D(v')\geq r$. For if either \(D(T_v)\) or \(D(v')\) is smaller, then subtracting \(r\) chips from that vertex would create a divisor that could not be made effective, contradicting \(r(D)=r\). So if $T\sim_D v,$ then $D(\{T_V,v'\})\geq 2r$.
        
        If $T \nsim_D v$, then we have $T_v \nsim_D v'$. Then there exists a firing script $\sigma$ that fires $T_v$ at least one more time than $v'$, or $v'$ at least one more time than $T_v$, such that $D'=D - L\sigma \geq 0$. By Lemma \ref{lemma:no_debt_increase}, we can perform subset-firing moves without introducing any new debt, and so we must have at least $r+2$ chips placed on whichever vertex can fire more, one for each parallel edge connecting $T_v$ to $v'$. We will show that in order for \(D\) to have rank $r$, we must have $2r+1$ chips. Suppose that we only have $2r$ chips on the subgraph (the contradiction will work just as well if it is fewer). Without loss of generality, assume that $T_v$ is fired more than $v'$ by \(\sigma\). If $k \geq r+2$ chips are placed on $T_v$ and $2r-k \leq r-2$ chips are placed on $v'$, then \(D'(v')=3r-k\leq 2r-2\) and \(D'(T_v)=k-r\geq 2\). Place \(r\) debt, with $2r-k+1$ debt on $v'$ and the rest on $T_v$, resulting in $-1$ chips on $v'$ and at most $r+1$ chips on $T_v$. By Lemma \ref{lemma:no_debt_increase} and \(r(D')=r(D)\geq r\), we must be able to eliminate debt via subset-firing moves without introducing intermediate debt; but debt on \(v'\) can only be eliminated by firing \(T_v\) and not \(v'\), introducing debt on \(T_v\), a contradiction. Thus the subgraph must have at least $2r+1$ chips, so $D(\{T_v,v'\}) \geq 2r+1$.
    \end{proof}
    
    \begin{lemma} \label{lemma:rth_gon_ev}
         For each $\{e_v,e_u\}$ subgraph of $G_r'$ with \(e(v,u)\in E(G)\), the number of chips of \(D\) on the subgraph must satisfy:
        \[
            D(\{e_v,e_u\}) \geq \begin{cases}
                2r & \text{ if } v \sim_D u\\
                2r - 1 & \text{ if } v \nsim_D u.
            \end{cases}
        \]
    \end{lemma}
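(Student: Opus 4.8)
The plan is to follow the proof of Lemma~\ref{lemma:rth_gon_Tv} almost verbatim, with the single structural change that the subgraph $\{e_v,e_u\}$ is joined internally by $r$ edges rather than the $r+2$ edges joining $T_v$ to $v'$; this is exactly what shifts the two bounds from $2r+1,2r$ down to $2r,2r-1$. First I would record the consequences of Corollary~\ref{corollary:m_stop}: since $v$ and $e_v$ are joined by $M$ edges we have $v\sim_D e_v$, and likewise $u\sim_D e_u$. As $\sim_D$ is an equivalence relation, the hypothesis $v\sim_D u$ yields $e_v\sim_D e_u$, while $v\nsim_D u$ yields $e_v\nsim_D e_u$; these are precisely the two cases of the statement.

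In the first case, since $e_v\sim_D e_u$ no firing script can move chips between $e_v$ and $e_u$ without creating debt, and because $v\sim_D e_v$ and $u\sim_D e_u$ neither vertex can receive chips from outside the subgraph either. Exactly as in Lemma~\ref{lemma:rth_gon_Tv}, rank $r$ then forces $D(e_v)\geq r$ and $D(e_u)\geq r$, since subtracting $r$ chips from either vertex would create debt that cannot be cleared; hence $D(\{e_v,e_u\})\geq 2r$.

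In the second case, $e_v\nsim_D e_u$, so by Lemma~\ref{lemma:rth_gon_dstopping} there is an effective $D'\sim D$ and a subset-firing separating $e_v$ from $e_u$; without loss of generality it fires $e_v$ strictly more than $e_u$. Decomposing this firing into nested subset-firing moves via Lemma~\ref{lemma:no_debt_increase}, I would examine the first move containing $e_v$: at that stage $e_u$ (and $u$) is still outside the fired set while $v$ has entered together with $e_v$, so $e_v$ has gained no chips yet and must hold at least $r$ chips to fire across the $r$ edges to $e_u$ without incurring debt. This gives $D(e_v)\geq r$. To upgrade this to $D(\{e_v,e_u\})\geq 2r-1$, suppose instead $D(\{e_v,e_u\})\leq 2r-2$, so that $D(e_u)\leq r-2$. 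I would then place $r$ units of debt, using $D(e_u)+1$ of them to push $e_u$ to $-1$ and the remaining $r-D(e_u)-1$ on $e_v$; a one-line count shows this leaves $e_v$ with $D(\{e_v,e_u\})+1-r\leq r-1$ chips. Since $u\sim_D e_u$, firing $u$ brings no chips to $e_u$, so the debt on $e_u$ can only be cleared by firing $e_v$; but $e_v$ now holds at most $r-1$ chips and cannot fire across its $r$ edges to $e_u$ without going into debt. By Lemma~\ref{lemma:no_debt_increase} this is impossible for a rank-$r$ divisor, a contradiction.

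The step I expect to be most delicate is justifying, in the second case, that after the $r$ units of debt are placed the only route for chips to reach $e_u$ is still through $e_v$ — that is, that the relation $u\sim_D e_u$ continues to block the $M$ edges from $u$. This holds because placing debt only lowers the degree, so the cut inequality underlying Corollary~\ref{corollary:m_stop} and Lemma~\ref{lemma:rth_gon_dstopping} remains satisfied for the perturbed divisor; but it is the one point where the argument genuinely relies on $M$ being chosen larger than $r|V(G'_r)|$, and so it should be spelled out rather than taken for granted.
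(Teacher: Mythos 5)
Your proposal is correct and takes essentially the same approach as the paper's proof: both cases hinge on transferring equivalence across the $M$-edge bundles ($v\sim_D e_v$, $u\sim_D e_u$), with the inequivalent case handled by placing $r$ units of debt split between $e_u$ (pushed to $-1$) and $e_v$, and deriving a contradiction because $e_v$ is left with at most $r-1$ chips and cannot fire across the edges joining it to $e_u$ without incurring debt. If anything, your accounting is more faithful to Construction \ref{rth_construction} than the paper's own text, which at one point misstates the number of edges between $e_v$ and $e_u$ as $r+1$ (and correspondingly writes $k\geq r+1$) when the construction specifies $r$; your version with $r$ edges and the threshold $D(e_v)\geq r$ is the consistent one, and your closing remark about why the blocking relations persist for the perturbed divisor makes explicit a point the paper leaves implicit.
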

    
    \begin{proof}
        For the first case, note that $v \sim_D e_v$ and $u \sim_D e_u$. Thus if $v\sim_D u$ then we have $e_v\sim_D e_u$. This means that no firing script can move chips from one of $e_v$ and $e_u$ to the other without introducing debt. Because $D$ has rank $r$, we must have $D(e_v) \geq r$ and $D(e_u)\geq r$. Thus if $v\sim_D u, D(\{e_v,e_u'\})\geq 2r$.
        
        For the second case, $v\nsim_D u$, so $e_v\nsim_D e_u$. So, there exists some firing script \(\sigma\) with \(D'=D-L\sigma\) effective with \(e_v\) and \(e_v\) fired a different number of times.  In order not to introduce debt when one of the vertices is fired, there must be at least $r+1$ chips on one of the vertices of the subgraph, because there are $r+1$ edges between $e_v$ and $e_u$. Suppose that only $2r-2$ chips are placed on the subgraph. Similarly to Lemma \ref{lemma:rth_gon_Tv}, assume that $e_v$ is fired more than $e_u$. If $k \geq r+1$ chips are placed on $e_v$ and $2r-2-k \leq r-1$ chips are placed on $e_u$, we can place $2r-1-k$ chips of debt on $e_u$ and the rest on $e_v$. This results in $-1$ chips on $e_u$ and at most $r$ chips on $e_v$. Since $e_v$ must be part of any firing set that eliminates debt on \(e_u\), which would introduce debt on \(e_v\), $D$ does not have rank $r$. Thus the subgraph must have at least $2r-1$ chips, so $D(\{e_v,e_u\}) \geq 2r-1$.
    \end{proof}
    
    \begin{lemma} \label{lemma:rth_gon_alpha}
        Construct \(G'_r\) from \(G\) and \(r\) as in Construction \ref{rth_construction}.  We have $$\textrm{dgon}_r(G_r') = r + (3r+1)|V| + (2r-1)|E| - \alpha(G).$$
    \end{lemma}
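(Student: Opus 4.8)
The plan is to establish the equality by proving the two inequalities separately. For the lower bound I would take an optimal divisor $D$ (effective, of rank $r$, with $\deg(D)=\dgon_r(G'_r)$) and bound $\deg(D)$ from below gadget by gadget; for the upper bound I would exhibit an explicit rank-$r$ divisor of the claimed degree built from a maximum independent set of $G$.

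For the lower bound I would first record two baseline estimates, $D(T)\ge r$ and $D(v)\ge r$ for every $v\in V(G)$. Indeed, by Corollary \ref{corollary:m_stop} every neighbor of $T$ is $\sim_D$-equivalent to $T$, so any script $\sigma$ with $D-L\sigma\ge 0$ fires $T$ and all its neighbors equally often; hence $(L\sigma)(T)=0$ and the value at $T$ is unchanged by every such move. Were $D(T)<r$, the divisor $D-rT$ could never be made effective, contradicting $r(D)=r$; the same argument applied to the neighbors $v',e_v$ of $v$ gives $D(v)\ge r$. Writing $A=\{v\in V(G): T\sim_D v\}$ and letting $s$ be the number of edges $e(u,v)\in E(G)$ with $u\nsim_D v$, Lemmas \ref{lemma:rth_gon_Tv} and \ref{lemma:rth_gon_ev} then give
\[
\deg(D)\ \ge\ \underbrace{r}_{D(T)}+\underbrace{r|V(G)|}_{\sum_v D(v)}+\underbrace{\big(2r|V(G)|+|V(G)\setminus A|\big)}_{\sum_v (D(T_v)+D(v'))}+\underbrace{\big(2r|E(G)|-s\big)}_{\sum_e (D(e_v)+D(e_u))}.
\]
Setting $E_{\mathrm{eq}}=\{e(u,v): u\sim_D v\}$, so that $s=|E(G)|-|E_{\mathrm{eq}}|$, this rearranges to
\[
\deg(D)\ \ge\ r+(3r+1)|V(G)|+(2r-1)|E(G)|-\big(|A|-|E_{\mathrm{eq}}|\big).
\]
It then suffices to show $|A|-|E_{\mathrm{eq}}|\le \alpha(G)$. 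Any edge inside $A$ has both endpoints $\sim_D T$, hence $\sim_D$ to each other, so $E(G[A])\subseteq E_{\mathrm{eq}}$ and $|A|-|E_{\mathrm{eq}}|\le |A|-|E(G[A])|$; since every graph $H$ satisfies $\alpha(H)\ge |V(H)|-|E(H)|$ (delete one endpoint of each edge), we get $|A|-|E(G[A])|\le \alpha(G[A])\le \alpha(G)$, completing the lower bound.

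For the upper bound I would fix a maximum independent set $S\subseteq V(G)$ and define $D$ by $D(T)=r$ and $D(v)=r$ for all $v$, placing $2r$ chips on $\{T_v,v'\}$ when $v\in S$ and $2r+1$ when $v\notin S$, and $2r-1$ chips on each $\{e_v,e_u\}$, with the precise splits chosen so that $T_v\nsim_D v'$ for $v\notin S$ and $e_v\nsim_D e_u$ for every edge. A direct count gives $\deg(D)=r+(3r+1)|V(G)|+(2r-1)|E(G)|-\alpha(G)$, which by $\alpha(G)\ge |V(G)|-|E(G)|$ is strictly less than $M$, so Lemma \ref{lemma:rth_gon_dstopping} guarantees all $M$-edge pairs are $D$-stopping. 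That $r(D)\le r$ is immediate, since $D-(r+1)T$ leaves $T$ in debt and $T$ can never gain chips, whence $|D-(r+1)T|=\emptyset$. The substance is showing $r(D)\ge r$: that any effective $E$ with $\deg(E)=r$ can be cleared from $D$. I would argue this by routing the debt through the backbone, using $T\sim_D v$ for $v\in S$ to shuttle chips between the $T$-block and the independent-set vertices and using the edge-inequivalences to pass chips across each $\{e_v,e_u\}$, exactly as in the $r=1$ analysis of \cite{GSW}.

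I expect the main obstacle to be this last step, $r(D)\ge r$: one must verify that the chosen placement really realizes the intended equivalence pattern — in particular that adjacent vertices outside $S$ remain $\sim_D$-inequivalent — and that every distribution of $r$ units of debt can be absorbed simultaneously without the firing moves required in different gadgets conflicting. It is precisely here that independence of $S$ is essential, since it ensures that no edge forces two ``saving'' gadgets to compete for the same chips. By contrast, the lower-bound combinatorics reduces cleanly to the elementary inequality $\alpha(H)\ge |V(H)|-|E(H)|$.
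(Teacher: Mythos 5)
Your lower bound is correct and coincides with the paper's own argument: the baseline bounds $D(T)\ge r$ and $D(v)\ge r$ (the paper derives these from Corollary \ref{corollary:m_stop} together with Lemma \ref{lemma:rth_gon_components}; your firing-script argument is the same content), the gadget bounds from Lemmas \ref{lemma:rth_gon_Tv} and \ref{lemma:rth_gon_ev}, and the final reduction via $\alpha(H)\ge |V(H)|-|E(H)|$ all match the paper's computation with $U_0$ and $E[U_0]$ (your $E_{\mathrm{eq}}$ is a superset of $E(G[A])$, which changes nothing).

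The upper bound, however, contains a genuine gap, and it is exactly the step you yourself flag as the ``main obstacle'': proving $r(D)\ge r$. You leave the divisor underdetermined (``precise splits chosen so that $T_v\nsim_D v'$ and $e_v\nsim_D e_u$'') and defer the debt-clearing to ``exactly as in the $r=1$ analysis of \cite{GSW}''. Neither can be deferred, because the splits and the clearing argument are intertwined. The paper first fixes an ordering $v_1,\ldots,v_k$ of $V\setminus S$ and orients every edge of $G$ (out of $S$, and from lower to higher index inside $V\setminus S$); the splits are dictated by this orientation: $(2r+1,0)$ on $(T_v,v')$ for $v\notin S$, $(2r-1,0)$ on $(e_u,e_v)$ for an edge from $u\in S$ to $v\notin S$, and $(r,r-1)$ on $(e_u,e_v)$ for an internal edge with tail $u$ and head $v$. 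The rank verification is then a case analysis on where $D-E$ can be in debt, with an explicit subset-firing move in each case: if debt sits on $e_v$ for an internal edge with head $v=v_i$, one shows $E$ must equal $re_v$ and fires the complement of a set $W_i$ built from the tail segment $\{v_i,\ldots,v_k\}$ of the ordering; otherwise all debt sits on zero-chip vertices and one fires a single set $U$ consisting of $T$, all $T_v$, $S$, $\{v' : v\in S\}$, and the $S$-side vertices $e_v$. Verifying that these moves absorb every possible distribution of $r$ units of debt without creating new debt uses the orientation essentially (each sending vertex holds $2r+1$, $2r-1$, or $r$ chips and sends $r+2$, $r$, or $r$), and it does not follow formally from the $r=1$ case: for $r\ge 2$ the adversary can spread debt across several gadgets, and the asymmetric split $(r,r-1)$ is precisely what forces the clean dichotomy of cases. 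As written, your proposal establishes only the inequality $\dgon_r(G_r')\ge r+(3r+1)|V|+(2r-1)|E|-\alpha(G)$, not the equality asserted by the lemma.
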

     
    \begin{proof}
        First we prove that $\textrm{dgon}_r(G_r') \geq  r + (3r+1)|V| + (r+1)|E| - \alpha(G)$. Let $D$ be an effective divisor on $G$ of degree $\textrm{dgon}_r(G_r')$ and rank $r$. By Corollary \ref{corollary:m_stop}, each \(M\)-parallel edge is \(D\)-stopping. 
        Then for each $v \in V$, $T$ is equivalent to each $T_v$, $v$ is equivalent to $v'$, and $v$ is also equivalent to $e_v$. By Lemma \ref{lemma:rth_gon_components}, the number of chips on each component $\{T\}, \{T_v,v'\},\{v\},\{e_v,e_u\}$ must remain constant for each effective divisor $D'\sim D$. Then because $D$ has rank $r(D) \geq r$, for each $u,v \in V$ and $e\in E$ we must have
        \begin{itemize}
            \item $D(T) \geq r$;
            \item $D(v) \geq r$ for all $v\in V$;
            \item $D(T_v) + D(v') \geq 2r$ if $T\sim_D v$, otherwise $\geq 2r+1$ (by Lemma \ref{lemma:rth_gon_Tv}); and
            \item $D(e_v) + D(e_u) \geq 2r$ if $v\sim_D u$, otherwise $\geq 2r-1$ (by Lemma \ref{lemma:rth_gon_ev}).
        \end{itemize}

        Let $U_0 = \{v\in V ~|~ T \sim_D v\}$ be the set of all vertices in $V$ equivalent to $T$. Adding up the contributions above, we have that
        \begin{align*}
            \textrm{dgon}_r(G_r') &\geq r + r|V| +(2r+1)|V| - |U_0| + (2r-1)|E| + |E[U_0]|\\
            &= r + (3r+1)|V| + (2r-1)|E| - |U_0| + |E[U_0]|.
        \end{align*}
        Since  $\alpha(G) \geq \alpha(G[U_0]) \geq |U_0| - |E[U_0]|$, we have 
        $$\textrm{dgon}_r(G_r') \geq r + (3r+1)|V| + (2r-1)|E| - \alpha(G).$$
        
        Now to show equality, we must show there exists a divisor of rank at least $r$ and degree $$r + (3r+1)|V| + (2r-1)|E| - \alpha(G)$$
         on $G_r'$. Let $S$ an independent set on $G$ of size $\alpha(G)$. Let $v_1, \ldots, v_k$ be a numbering of the vertices of $V\setminus S$. Orient the edges of $G$ as follows: for $1 \leq i < j \leq k$, orient the edges in $E(v_i,v_j)$ from $v_i$ to $v_j$. For each edge between some $v_0\in S$ and $v_i$ for $1\leq i \leq k$, orient the edge from $v_0$ to $v_i$. Since \(S\) is an independent set, we have oriented all edges of \(G\). Define a divisor $D$ on $G_r'$ as follows:
        
        \begin{itemize}
            \item $D(T) = r$,
            \item $D(v) = r$ for $v \in V$,
            \item $D(T_v) = D(v') = r$ for $v\in S$,
            \item $D(T_v) = 2r+1$ for $v \notin S$,
            \item $D(v') = 0$ for $v \notin S$,
            \item $D(e_u) = 2r-1$ and \(D(e_v)=0\) for every edge $e(u,v) \in E$ with $u\in S$ and $v\notin S$ (note that \(u\) is the tail and \(v\) is the head), and
            \item $D(e_u) = r$ and $D(e_v) = r-1$ for every edge $e(v,u) \in E$ with $v,u\notin S$, with tail $u$ and head \(v\).
        \end{itemize}
        
        \begin{figure}[hbt]
            \centering
            \includegraphics[scale=0.65]{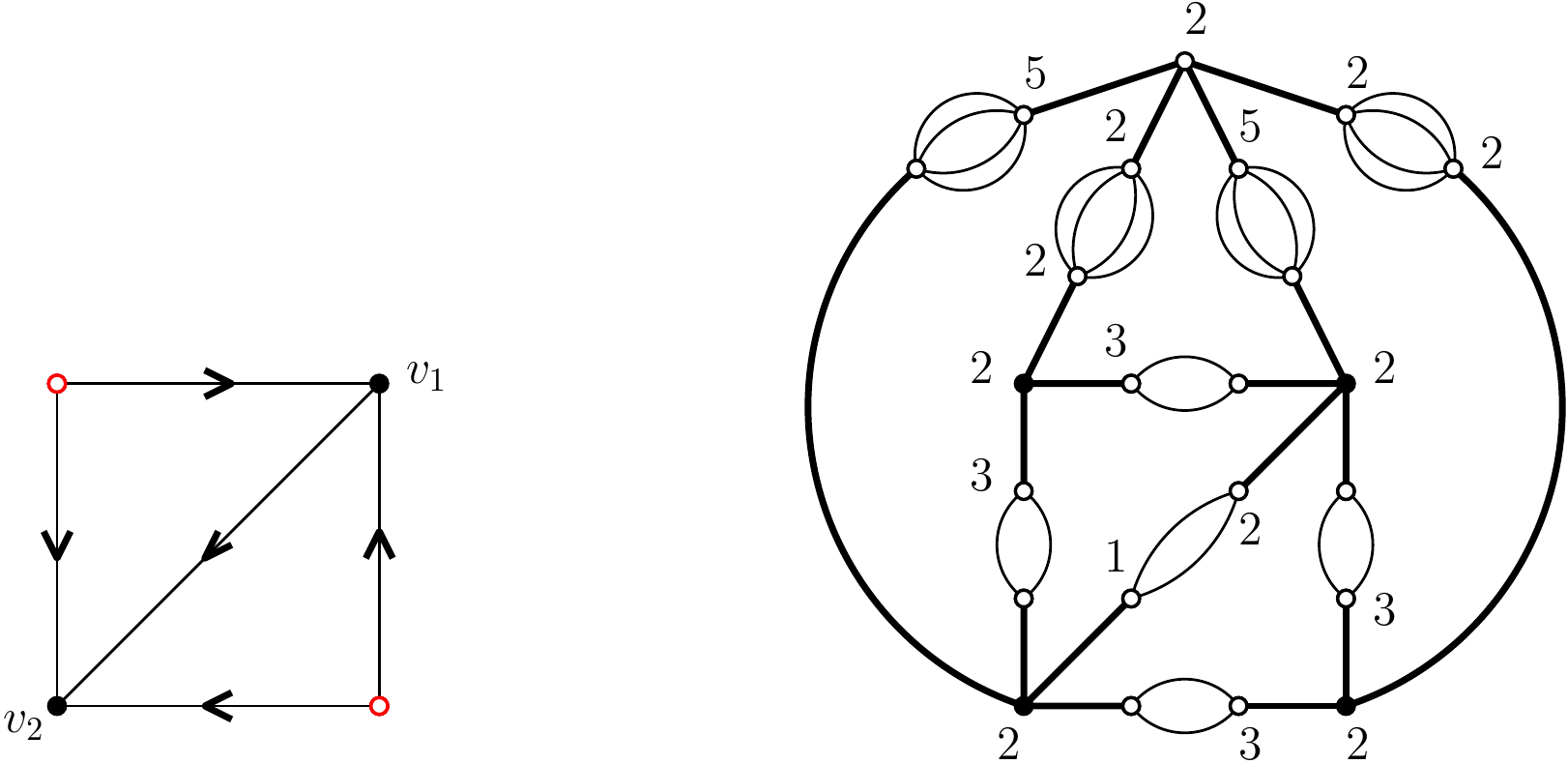}
            \caption{The left shows example ordering and orientation of $G$ with $S$ marked with empty circles. The right shows the corresponding divisor on $G_2'$, with degree $r + (3r+1)|V| + (2r-1)|E| - \alpha(G) = 43$.}
            \label{fig:rth_gon_divisor}
        \end{figure}
        
        Figure \ref{fig:rth_gon_divisor} shows an example of a graph \(G\) along with this divisor \(D\) on $G_2'$. The degree of $D$ is 
        $$r + (3r+1)|V| + (2r-1)|E| - \alpha(G).$$ Now we must show $D$ has rank at least $r$. Let $E$ be an effective divisor of degree $r$. If $D-E$ is effective, we are done. Assume \(D-E\) is not effective.

If \(D-E\) it has any debt on a \(\{e_v,e_u\}\) subgraph with \(v,u\notin S\), say with \(v\) as the head of \(e(u,v)\), then since \(D(e_v)=r-1\) and \(D(e_u)=r\) we know \(E=re_v\), and that \((D-E)(w)=D(w)\) for all \(w\neq e_v\); in particular, all debt in \(D-E\) is on \(e_v\).
To see that debt can be eliminated in this case, note that \(v=v_i\) for some \(i\), and let \(V_i=\{v_i,\ldots,v_k\}\). Consider the set
        $$W_i = V_i \cup \{v' ~|~ v\in V_i\} \cup \{e_v ~|~ v \text{ is an endpoint of the edge } e \text{ for some } v\in V_i\}.$$
        Transform \(D-E\) by firing \((W_i)^C\), the complement of this subset.  Since \(e_v\in W_i\) and \(e_u\notin W_i\), debt is eliminated on \(e_v\).  To verify that debt is not introduced elsewhere on the graph, note that the only net movement of chips is from \(T_v\) to \(v'\) where \(v\in V_i\subset V\setminus S\); and from \(e_u\) to \(e_v\) where \(e(u,v)\in E(G)\) with \(v\in V_i\) and \(u\notin V_i\), meaning by our choice of orientation that \(e(u,v)\) has tail \(u\) and head \(v\).  In the first case, \(T_v\) has \(2r+1\) chips and loses \(r+2\); and in the second case, since \(u\) is the tail, \(e_u\) has either \(2r-1\) or \(r\) chips and loses \(r\); thus no new debt is introduced.  Hence debt is eliminated in \(D-E\) by our subset-firing move. For an example of \(D-E\) and the subset-firing move in this case, see the left image in Figure \ref{figure:rth_gon_debt_update}.
        
        \begin{figure}[hbt]
            \centering
            \includegraphics[scale=0.6]{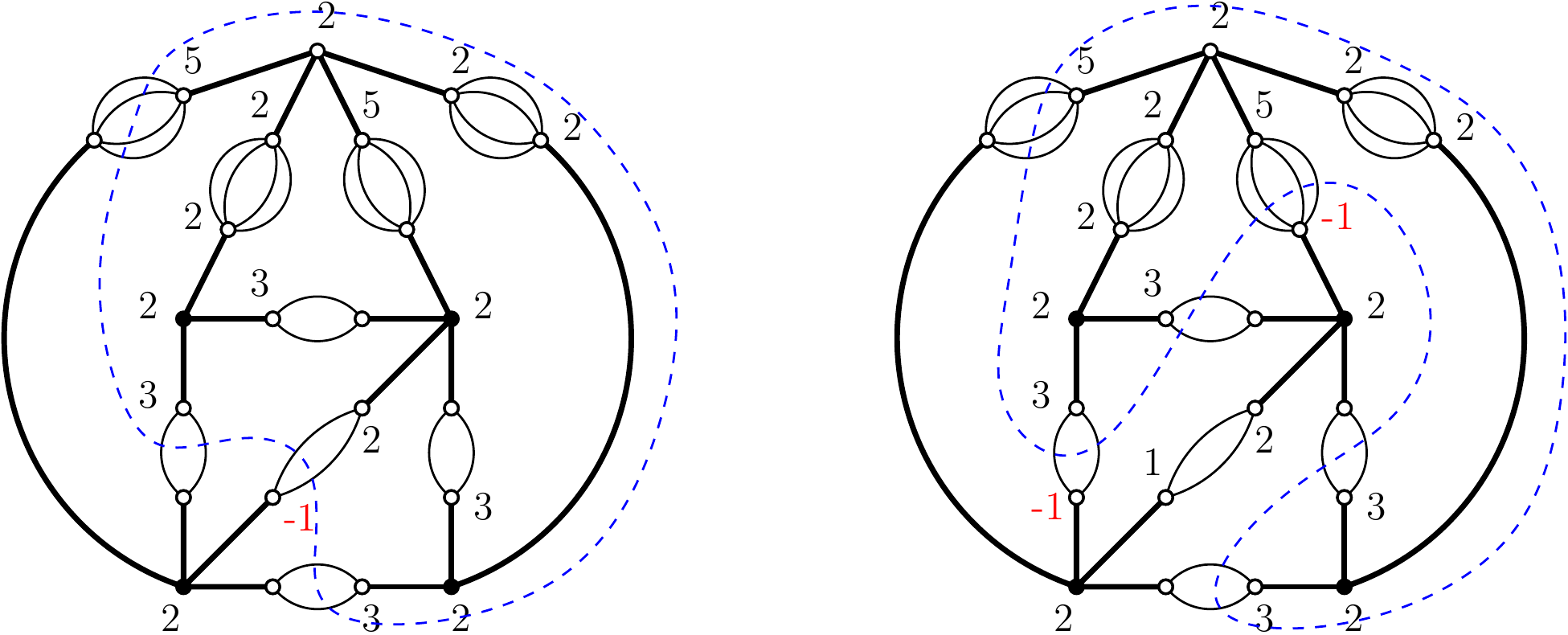}
            \caption{Two possible debt configurations of \(D-E\), and subsets to fire to eliminate the debt.}
            \label{figure:rth_gon_debt_update}
        \end{figure}
        
        Now assume that \(D-E\) has no debt on any \(\{e_v,e_u\}\) subgraph with \(v,u\notin S\).  No debt can be on a vertex \(w\) with \(D(w)\geq r\), so we now have that all debt in \(D-E\) is on a combination of vertices \(v'\) with \(v\notin S\), and vertices \(e_u\) with \(e(u,v)\in E(G)\) and \(u\notin S,v\in S\). Consider the following set of vertices:
        \[U=\{T\}\cup \{T_v\,|\, v\in V(G)\}\cup S\cup \{v'\,|\,v\in S\}\cup \{e_v\,|\, e(u,v)\in E(G), v\in S\}.\]
        First we claim that firing \(U\) does not introduce any new debt in \(D-E\).  Firing \(U\) has the following effect:
        \begin{itemize}
            \item \(r+2\) chips are moved from \(T_v\) to \(v'\), for every \(v\notin S\).
            \item \(r\) chips are moved from \(e_v\) to \(e_u\), for every \(e(u,v)\in E(G)\) with \(v\in S\).
        \end{itemize}
        Since \(\deg(E)=r\) and \(D-E\) is not effective, we have \((D-E)(w)\geq D(w)-(r-1)\) for every \(w\in V(G_r')\) with \((D-E)(w)\geq 0\).  Since \(D(T_v)=2r+1\) for \(v\notin S\), we have \((D-E)(T_v)\geq r+2\); and since \(D(e_v)=2r-1\) for \(v\in S\), we have \((D-E)(e_v)\geq r\). Thus firing \(U\) does not introduce any debt.  For an example of \(D-E\) and the subset-firing move in this case, see the right image in Figure \ref{figure:rth_gon_debt_update}.
        
        We now claim that firing the subset \(U\) eliminates all debt in \(D-E\). Indeed, at least \(r\) chips are moved to every \(v'\) with \(v\notin S\), and to every \(e_u\) with \(e(u,v)\in E(G)\) and \(v\in S\); as previously noted, these were the only possible vertices with debt, and each vertex had at most \(r\) debt since \(\deg(E)=r\).  Thus firing the subset eliminates all debt, and we have that \(r(D)\geq r\).  This lets us conclude that \[\dgon_r(G'_r)\leq \deg(D)=r + (3r+1)|V| + (2r-1)|E| - \alpha(G).\]
        
        Having obtained upper and lower bounds that are equal, we have shown that $\textrm{dgon}_r(G_r') = r + (3r+1)|V| + (2r-1)|E| - \alpha(G)$.
    \end{proof}
    
    We now conclude with the proof of Theorem \ref{theorem:r_hard_finite}, that computing $r^{th}$ divisorial gonality of a finite graph is NP-hard.
    
    \begin{proof}[Proof of Theorem \ref{theorem:r_hard_finite}.]
        Given $G$, we can construct $G_r'$ with Construction \ref{rth_construction} with only polynomial increase in the number of vertices and edges. By Lemma \ref{lemma:rth_gon_alpha}, $\textrm{dgon}_r(G_r') = r + (3r+1)|V| + (2r-1)|E| - \alpha(G)$. Because $\alpha(G)$ is NP-hard to lower bound, $r^{th}$ gonality is NP-hard to upper bound.
    \end{proof}

\section{Higher stable divisorial gonality}
\label{section:higher_stable}

     Recall that the \(r^{th}\) stable divisorial gonality of a graph $G$ is the minimum \(r^{th}\) divisorial gonality of any graph which is a subdivision of $G$.  For any fixed \(r\geq 1\), we define the \textsc{\(r^{th}\) Stable Divisorial Gonality} problem as follows.

\begin{itemize}
  \item[] \textsc{\(r^{th}\) Stable Divisorial Gonality}
  \item[] \textbf{Input:} a graph \(G=(V,E)\) and an integer \(k\leq r|V|\).
  \item[] \textbf{Question:} is \(\sdgon_r(G)\leq k\)?
\end{itemize}
     
     It is proved in \cite{GSW} that this problem is hard for \(r=1\). To prove $\textrm{sdgon}_r(G)$ is NP-hard for all $r$, we will use an additional definition and result from \cite{GSW}.
        Given an effective divisor $D$, define a \emph{$D$-stopping path} as a path where every internal vertex has degree 2 and whose ends are equivalent under $\sim_D$. Note that if $D'\sim D$ then the $D$-stopping paths are the same as the $D'$-stopping paths.
    
    A $D$-stopping path is considered \emph{clean} if it has 1 or fewer chips total on its internal vertices. Note that if two or more chips are on a path's internal vertices, subsets of the path can be fired to move at least one chip to an endpoint, so there is always an effective $D'\sim D$ in which all $D$-stopping paths are clean.
    
    \begin{lemma} \label{lemma:rth_sgon_dstopping}
        (Lemma 3.6 in \cite{GSW}) Let $D\geq0$ be a divisor on $G=(V,E)$. Suppose that all $D$-stopping paths are clean. Let $U$ be a component of the subgraph obtained from $G$ by deleting the edges and internal vertices of all $D$-stopping paths. Then for every effective divisor $D'\sim D$ we have $\sum_{u\in U} D'(u)\leq \sum_{u\in U} D(u)$.
    \end{lemma}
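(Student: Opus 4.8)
The plan is to follow the proof of the edge version, Lemma~\ref{lemma:rth_gon_components}, replacing its ``no net flow across a $D$-stopping edge'' with a one-sided ``no net inflow across a clean $D$-stopping path,'' which is what produces an inequality rather than an equality. Fix an effective $D'\sim D$ and a firing script $\sigma$ with $D'=D-L\sigma$. Since $\sum_{u\in U}D'(u)=\sum_{u\in U}D(u)-\sum_{u\in U}(L\sigma)(u)$, it suffices to show $\sum_{u\in U}(L\sigma)(u)\ge 0$. Expanding the Laplacian, this quantity equals the sum, over edges joining $U$ to its complement, of $\sigma(\text{endpoint in }U)-\sigma(\text{endpoint outside }U)$.

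First I would observe that every edge leaving $U$ lies on some $D$-stopping path: otherwise it would survive the deletion and join two vertices of the same component of the reduced graph, contradicting that its endpoints lie on opposite sides of the boundary of $U$. Grouping the boundary edges by the maximal $D$-stopping path containing them---these have pairwise disjoint interiors, and their endpoints are $\sim_D$-equivalent by transitivity of $\sim_D$---I can write $\sum_{u\in U}(L\sigma)(u)$ as a sum of per-path contributions. For a maximal stopping path $P=a-p_1-\cdots-p_m-b$ with $2$-valent (hence deleted) interior vertices $p_1,\dots,p_m$, the only boundary edges are $a-p_1$ (present exactly when $a\in U$) and $p_m-b$ (present exactly when $b\in U$), so $P$ contributes $\sigma(a)-\sigma(p_1)$ when $a\in U$, plus $\sigma(b)-\sigma(p_m)$ when $b\in U$.

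The crux is to show each contribution is non-negative, which reduces to the two inequalities $\sigma(p_1)\le\sigma(a)$ and $\sigma(p_m)\le\sigma(b)$. Writing $g(j)=\sigma(p_j)$ with $p_0=a$ and $p_{m+1}=b$, two facts constrain $g$: the endpoints are equivalent, so $g(0)=g(m+1)$ (because $D'=D-L\sigma\ge0$ and $a\sim_D b$); and effectiveness of $D'$ at each $2$-valent interior vertex gives $g(j-1)-2g(j)+g(j+1)=D'(p_j)-D(p_j)\ge -D(p_j)$, so the discrete second difference of $g$ is bounded below by $-D(p_j)$. Thus $g$ is discretely convex up to a total deficit $\sum_{j}D(p_j)\le 1$ supplied by cleanliness. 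Solving this discrete boundary-value problem yields the explicit identity $g(1)-g(0)=-\tfrac{1}{m+1}\sum_{j=1}^m(m+1-j)\bigl(D'(p_j)-D(p_j)\bigr)$, whence $g(1)-g(0)\le \tfrac{m}{m+1}<1$ using $D'\ge0$ and $\sum_jD(p_j)\le1$; since firing counts are integers, $g(1)\le g(0)$, and symmetrically $g(m)\le g(m+1)$.

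The main obstacle is exactly this last estimate, where the hypotheses interact in a coupled way: a purely convex $g$ with equal endpoints automatically lies below its constant boundary value, but the single permitted interior chip can make $g$ locally concave and a priori push $g(1)$ above $g(0)$. The content of the argument is that one chip is too little to achieve this once the integrality of $\sigma$ is invoked, and this is also precisely the source of the inequality (rather than the equality of the edge case), since the estimate is genuinely one-sided. Summing the resulting non-negative per-path contributions gives $\sum_{u\in U}(L\sigma)(u)\ge0$, and hence $\sum_{u\in U}D'(u)\le\sum_{u\in U}D(u)$, as desired.
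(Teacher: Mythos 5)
The paper offers no proof of this lemma to compare against---it is imported verbatim as Lemma 3.6 of \cite{GSW}---so your proposal must be judged as a self-contained argument, and as such it is correct. The skeleton is sound: since $D'=D-L\sigma$, it suffices to show $\sum_{u\in U}(L\sigma)(u)\geq 0$; that sum equals the sum of $\sigma(x)-\sigma(y)$ over boundary edges with $x\in U$, $y\notin U$; every boundary edge lies on a $D$-stopping path; and your discrete boundary-value computation is right: with $g(j)=\sigma(p_j)$, the equality $g(0)=g(m+1)$ follows from $a\sim_D b$ and $D-L\sigma\geq 0$, the second differences equal $D'(p_j)-D(p_j)$ exactly, solving gives your identity for $g(1)-g(0)$, and then $g(1)-g(0)\leq\frac{m}{m+1}<1$ forces $g(1)\leq g(0)$ by integrality of firing scripts, with the symmetric bound at the other end. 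Two repairs are worth making, though neither is fatal. First, your side claim that maximal $D$-stopping paths have pairwise disjoint interiors is false in general: along a chain $v_0-v_1-v_2-v_3-v_4$ with $v_1,v_2,v_3$ of valence $2$, the paths $v_0-v_1-v_2-v_3$ (when $v_0\sim_D v_3$) and $v_1-v_2-v_3-v_4$ (when $v_1\sim_D v_4$) can both be maximal yet share the interior vertex $v_2$. Fortunately the grouping is unnecessary: a boundary edge's endpoint $x\in U$ is not deleted, hence not internal to \emph{any} stopping path, so every stopping path through that edge has $x$ as an endpoint and that edge as its end edge; applying your estimate to any one such path (clean by hypothesis) shows each boundary edge's contribution is individually nonnegative, and summing finishes the proof with no bookkeeping about maximality or disjointness. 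Second, the degenerate case $m=0$ (a $D$-stopping edge crossing the boundary) should be stated separately: there the contribution is $\sigma(a)-\sigma(b)=0$ outright. With these cosmetic fixes your argument stands, and it supplies an explicit proof of a statement the present paper only cites.
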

    
    We will now prove Theorem \ref{theorem:r_hard_stable}, that computing the $r^{th}$ stable gonality of a graph is NP-hard.
    
    \begin{proof}[Proof of Theorem \ref{theorem:r_hard_stable}.]
        Given a graph $G$, construct  $G_r'$ as in Construction \ref{rth_construction}. We prove that $\textrm{sdgon}_r(G_r') = \textrm{dgon}_r(G_r')$. For this, it is sufficient to prove $\textrm{dgon}_r(G_r'') \geq \textrm{dgon}_r(G_r')$ for an arbitrary subdivision $G_r''$ of $G_r'$.
        
        Let $G_r''$ be a subdivision of $G_r'$. Let $D$ be an effective divisor on $G_r''$ with rank $r$. Note that $r$ chips on each vertex of $G_r'$ has rank $r$ on $G_r''$, so $\deg(D) < M$, and thus the $D$-stopping edges in $G_r'$ are subdivided into $D$-stopping paths in $G_r''$. We assume that all $D$-stopping paths are clean. Then similar to Lemma \ref{lemma:rth_gon_alpha} (but using Lemma \ref{lemma:rth_sgon_dstopping} instead of Lemma \ref{lemma:rth_gon_dstopping}), for every vertex $v\in V$ and edge $e(u,v) \in E$, we have that
        \begin{itemize}
            \item $D(T) \geq r$,
            \item $D(v) \geq r$ for all $v\in V$,
            \item $D(T_v) + D(v') \geq 2r$ if $T\sim_D v$ ,
            \item $D([v']) \geq 2r+1$ if $T\nsim_D v$,
            \item $D(e_v) + D(e_u) \geq 2r$ if $v\sim_D u$,
            \item $D([e_v]) \geq 2r-1$ if $v\nsim_D u$,
        \end{itemize}
        where $[v']$ and $[e_v]$ are components of the subgraph of $G_r''$ after removing all interior edges and vertices of $D$-stopping paths. Like in Lemma \ref{lemma:rth_gon_alpha}, let $U_0 = \{v\in V ~|~ T \sim_D v\}$. Then we have that $\deg(D) \geq r + (3r+1)|V| + (2r-1)|E| - |U_0| + |E[U_0]|$.
        Since  $\alpha(G) \geq G[U_0] \geq |U_0| - |E[U_0]|$, we have $\deg(D) \geq r + (3r+1)|V| + (2r-1)|E| - \alpha(G) = \textrm{dgon}_r(G_r')$. Then $\textrm{dgon}_r(G_r'') \geq \textrm{dgon}_r(G_r')$, and so $\textrm{dgon}_r(G_r'') \geq \textrm{dgon}_r(G_r')$.  Since we chose an arbitrary subdivision, we have \(\sdgon_r(G_r')=\dgon_r(G_r')\). From here the remainder of the proof is identical to that of Theorem \ref{theorem:r_hard_finite}.
    \end{proof}

We can obtain our NP-hardness results for various subclasses of graphs.

\begin{corollary}
The \textsc{\(r^{th}\) Divisorial Gonality} and \textsc{\(r^{th}\) Stable Divisorial Gonality} remain NP-hard when we restrict the inputs to bipartite graphs.
\end{corollary}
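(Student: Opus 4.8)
The plan is to reduce from \textsc{Independent Set} on general graphs, exactly as before, but to arrange that the graph produced by the reduction is bipartite. Note first that one cannot simply restrict the input \(G\) to be bipartite: by König's theorem \(\alpha(G)\) is computable in polynomial time on bipartite graphs, so such a restriction would give no hardness. Instead I would keep \(G\) arbitrary and modify Construction \ref{rth_construction} so that its output is bipartite while its \(r^{th}\) gonality still encodes \(\alpha(G)\). A quick inspection shows that \(G'_r\) is already almost bipartite: assigning \(T\) the color \(0\) forces \(T_v\) to color \(1\), then \(v'\) to \(0\), then \(v\) to \(1\), and hence both \(e_v\) and \(e_u\) to color \(0\). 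The only monochromatic edges are therefore the bundles between \(e_v\) and \(e_u\), and these do lie on odd cycles (for instance \(v\,e_v\,e_u\,u\,u'\,T_u\,T\,T_v\,v'\,v\), of length \(9\)).

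The fix I would use is to break each offending bundle with a single new vertex. Concretely, modify Construction \ref{rth_construction} by replacing, for every \(e(u,v)\in E(G)\), the bundle of edges between \(e_v\) and \(e_u\) by a two-edge path \(e_v\,m_e\,e_u\) through a new vertex \(m_e\), where \(m_e\) is joined to each of \(e_v\) and \(e_u\) by the \emph{same} number of parallel edges as appeared in the original bundle. Coloring \(m_e\) with color \(1\) extends the partial coloring above to a proper \(2\)-coloring, so the modified graph \(\hat G_r\) is bipartite (parallel edges between oppositely colored vertices do not affect this). The essential point is that retaining the full edge-multiplicity on each side of \(m_e\) preserves \emph{bulk} chip movement: firing a set containing \(e_u\) but not \(m_e\) still delivers an entire batch of chips to \(m_e\) in a single move. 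A naive subdivision that replaces each parallel edge by its own degree-\(2\) vertex would instead disperse chips across \(r\) independent relays; the divisor obtained by placing no chips on those relays then fails to have rank \(r\) when \(r\geq 2\), so such a subdivision does not obviously preserve the gonality. Avoiding this dispersion is the crux of the construction.

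With \(\hat G_r\) in hand I would re-run the analysis of Section \ref{section:higher_finite} essentially verbatim. The \(M\)-parallel edges are untouched, so Corollary \ref{corollary:m_stop} and the component decomposition of Lemma \ref{lemma:rth_gon_components} still apply; the only new feature is that the component formerly equal to \(\{e_v,e_u\}\) becomes \(\{e_v,m_e,e_u\}\), with \(m_e\) carrying no chips in the optimal divisor. The lower bounds of Lemmas \ref{lemma:rth_gon_Tv} and \ref{lemma:rth_gon_ev} are then recovered with \(m_e\) acting as a pass-through, and the divisor built in Lemma \ref{lemma:rth_gon_alpha} extends by setting \(D(m_e)=0\). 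To re-verify that this divisor still has rank \(r\), debt landing on a hub \(m_e\), or pushed past it, is cleared by a short nested sequence of subset-firing moves through \(m_e\) (for instance, first fire a set sending a batch of chips from \(e_u\) into \(m_e\), then fire that set together with \(m_e\) to send a batch on to \(e_v\)); by Lemma \ref{lemma:no_debt_increase} the nesting guarantees no intermediate debt is introduced. Since each \(m_e\) contributes \(0\) to the degree, one obtains exactly
\[
\textrm{dgon}_r(\hat G_r) = r + (3r+1)|V| + (2r-1)|E| - \alpha(G).
\]

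Finally, the stable case follows the same template: repeating the argument in the proof of Theorem \ref{theorem:r_hard_stable} on \(\hat G_r\), with \(D\)-stopping paths (Lemma \ref{lemma:rth_sgon_dstopping}) in place of \(D\)-stopping edges, yields \(\textrm{sdgon}_r(\hat G_r) = \textrm{dgon}_r(\hat G_r)\). Because \(\hat G_r\) is bipartite and is built from \(G\) in polynomial time, both reductions then establish NP-hardness on bipartite inputs. I expect the main obstacle to be precisely the point flagged above, namely certifying that the hub preserves bulk chip transfer so that both the rank-\(r\) verification and the degree formula survive unchanged; once that is in place, the remainder is a routine adaptation of the arguments already given.
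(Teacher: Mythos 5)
Your proposal rejects, on incorrect grounds, the very route the paper takes. The paper's proof is short: it considers \(\sigma_2(G'_r)\), the uniform \(2\)-subdivision of the \emph{unmodified} Construction \ref{rth_construction}, observes that this subdivision is bipartite (original vertices in one part, midpoint vertices in the other), and invokes the proof of Theorem \ref{theorem:r_hard_stable}, which already shows that every subdivision of \(G'_r\) has \(r^{th}\) gonality equal to \(\dgon_r(G'_r)=r+(3r+1)|V|+(2r-1)|E|-\alpha(G)\); the reduction from independent set then goes through verbatim, and likewise for the stable version since subdivisions of \(\sigma_2(G'_r)\) are subdivisions of \(G'_r\). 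Your stated reason for dismissing this approach --- that subdividing a bundle ``disperses chips across \(r\) independent relays'' and kills the rank of the Lemma \ref{lemma:rth_gon_alpha} divisor when \(r\geq 2\) --- is false. After subdivision each relay is joined to \(e_u\) and to \(e_v\) by \emph{single} edges, so chips cross one at a time and the relays can be fired in nested, mutually independent subsets: for \(r=2\), chips \((3,0,0,0)\) on \((e_u,m_1,m_2,e_v)\) with debt \(E=m_1+e_v\) are handled by firing a set \(U\ni e_u\) and then \(U\cup\{m_2\}\), with no intermediate debt. Dispersal is precisely what makes subdivision harmless.

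The genuine gap is that your hub gadget provably fails for \(r\geq 2\), for exactly the reason you worried about in the subdivision setting: keeping multiplicity \(r\) on \emph{both} sides of \(m_e\) forces every transfer across the hub to cost a full batch of \(r\) chips, so split debt requires two batches. Concretely, for an edge \(e(u,v)\) with \(u\in S\), \(v\notin S\), your divisor places \((2r-1,0,0)\) on \((e_u,m_e,e_v)\); take \(E=m_e+e_v+(r-2)T\), effective of degree \(r\). The \(M\)-bundles isolate the gadget (no vertex outside it can fire into it without \(M\) chips, by the arguments behind Corollary \ref{corollary:m_stop} and Lemma \ref{lemma:no_debt_increase}), chips reach \(e_v\) only from \(m_e\) in batches of \(r\), and chips reach \(m_e\) only from \(e_u\) in batches of \(r\). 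Writing \(s,t\) for the net relative firings of \(e_u\) over \(m_e\) and of \(m_e\) over \(e_v\), ending effective forces \(-1+tr\geq 0\) and \(-1+sr-tr\geq 0\), hence \(t\geq 1\) and \(s\geq 2\), so \(e_u\) must emit \(sr\geq 2r\) chips while holding only \(2r-1\). Your proposed two-step nested firing does not repair this: after the first batch leaves \(e_u\) it holds \(r-1\) chips and can never be refilled. The same failure occurs on the \((r,0,r-1)\) gadgets via \(E=e_u+m_e+(r-2)T\), where neither endpoint retains the \(r\) chips needed to rescue \(m_e\). So the constructed divisor does not have rank \(r\), the claimed equality \(\dgon_r(\hat{G}_r)=r+(3r+1)|V|+(2r-1)|E|-\alpha(G)\) is not established, and since the true gadget costs change (and \(M\) would in any case need to be enlarged to exceed \(r|V(\hat{G}_r)|\)), it is not even clear that \(\dgon_r(\hat{G}_r)\) encodes \(\alpha(G)\) at all. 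As written, the proposal does not prove the corollary; the subdivision argument you set aside is both correct and already available from Theorem \ref{theorem:r_hard_stable}.
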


\begin{proof}
Let \(G\) be any finite graph, let \(G'_r\) be constructed as usual, and consider \(\sigma_2(G'_r)\).  Note that \(\sigma_2(G'_r)\) is a bipartite graph, with partite sets given by the original vertices as one set, and the new vertices as the other set. Since this graph is a subdivision of \(G'_r\), by the previous proof we have \[\dgon_r(G'_r)=\sdgon_r(G'_r)\geq \dgon_r(\sigma_2(G'_r))\geq \dgon_r(G'_r),\]
so \(\dgon_r(G'_r)=\dgon_r(\sigma_2(G'_r))\).  Moreover, since any subdivision of \(\sigma_2(G'_r))\) is also a subdivision of \(G'_r\), we have \(\sdgon_r(G'_r)=\sdgon_r(\sigma_2(G'_r))\).  Thus if we can lower bound either \(\dgon_r(\sigma_2(G'_r))\) or \(\sdgon_r(\sigma_2(G'_r))\) efficiently, we can also upper bound \(\alpha(G)\) efficiently.  It follows that these problems are NP-hard, even for bipartite graphs.
\end{proof}

For the next result, we recall that a graph is an \emph{apex graph} if there exists a vertex that, when deleted, yields a planar graph.

\begin{theorem}
The \textsc{\(r^{th}\) Divisorial Gonality} and \textsc{\(r^{th}\) Stable Divisorial Gonality} remain NP-hard when we restrict the inputs to apex graphs.
\end{theorem}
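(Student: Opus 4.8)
The plan is to reduce from \textsc{Maximum Independent Set} restricted to planar graphs, which remains NP-hard by the classical result of Garey, Johnson, and Stockmeyer. The key observation is that Construction \ref{rth_construction} already produces an apex graph whenever the input graph \(G\) is planar, with the single vertex \(T\) serving as the apex.

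First I would analyze the structure of \(G'_r - T\), the graph obtained by deleting \(T\). In \(G'_r\) the vertex \(T\) is incident only to the vertices \(T_v\), so deleting it disconnects nothing else. After removing \(T\), each \(T_v\) is incident only to \(v'\), so the vertices \(v, v', T_v\) form a pendant path \(v - v' - T_v\) attached at \(v\). Likewise, for each \(e(u,v)\in E(G)\) the edge gadget becomes a path \(v - e_v - e_u - u\); that is, the original edge \(uv\) is replaced by a path subdividing it twice. Hence \(G'_r - T\) is obtained from \(G\) by subdividing every edge twice and attaching a pendant path of length two at every original vertex. Since loopless parallel edges do not affect planarity, and since both subdividing edges and attaching pendant paths preserve planarity, the graph \(G'_r - T\) is planar whenever \(G\) is planar. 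Thus \(G'_r\) is an apex graph with apex \(T\).

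With this structural fact in hand the reduction is immediate. Given a planar graph \(G\), construct \(G'_r\) as in Construction \ref{rth_construction}; this takes time polynomial in the size of \(G\) and yields an apex graph. By Lemma \ref{lemma:rth_gon_alpha} we have \(\dgon_r(G'_r) = r + (3r+1)|V| + (2r-1)|E| - \alpha(G)\), and by the proof of Theorem \ref{theorem:r_hard_stable} we also have \(\sdgon_r(G'_r) = \dgon_r(G'_r)\). Hence an efficient lower bound on either \(\dgon_r(G'_r)\) or \(\sdgon_r(G'_r)\) would yield an efficient upper bound on \(\alpha(G)\) for planar \(G\), contradicting the NP-hardness of planar \textsc{Maximum Independent Set}. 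This establishes NP-hardness of both problems when the input is restricted to apex graphs.

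I expect the main obstacle to be the careful verification that \(G'_r - T\) is planar: one must confirm that no edges other than those incident to \(T\) produce a non-planar structure, and that the bundles of \(M\) parallel edges (as well as the \((r+2)\)-edge and \(r\)-edge bundles) can be routed without crossings. This reduces to the routine observation that, starting from a planar embedding of \(G\), one may replace each simple edge by a bundle of parallel edges drawn as nested arcs, insert the degree-two subdivision vertices \(e_v, e_u\), and attach the pendant tails \(v - v' - T_v\) in a small disk around each \(v\), all while keeping the drawing planar. The argument is elementary but should be stated precisely so that the apex property is rigorously justified.
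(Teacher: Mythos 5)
Your proposal is correct and matches the paper's own argument: the paper likewise reduces from planar \textsc{Maximum Independent Set}, observes that \(G'_r\) is an apex graph with apex \(T\), and invokes Lemma \ref{lemma:rth_gon_alpha} together with the proof of Theorem \ref{theorem:r_hard_stable}. The only difference is that you spell out why \(G'_r - T\) is planar (pendant paths \(v - v' - T_v\), twice-subdivided edges, parallel edges), a fact the paper simply asserts in one line, so your write-up is if anything slightly more complete.
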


\begin{proof}
Let \(G\) be a planar graph, and construct \(G'_r\) as usual.  We remark that \(G'_r\) is an apex graph, as deleting the vertex \(T\) yields a planar graph.  Since lower bounding \(\alpha(G)\) is NP-hard even for \(G\) planar \cite{independent_planar}, it follows that it must be NP-hard to upper bound \(r^{th}\) divisorial gonality and \(r^{th}\) stable divisorial gonality for apex graphs.
\end{proof}

We close this section with the following conjecture.

\begin{conjecture}
The \textsc{\(r^{th}\) Divisorial Gonality} and \textsc{\(r^{th}\) Stable Divisorial Gonality} remain NP-hard when we restrict the inputs to planar graphs.
\end{conjecture}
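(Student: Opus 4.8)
The plan is to reduce from \textsc{Independent Set} on planar graphs, which is NP-hard \cite{independent_planar}, by planarizing Construction \ref{rth_construction}. The only obstruction to planarity is the vertex \(T\): deleting it yields a planar graph (this is exactly why \(G'_r\) is apex), since the gadgets \(\{v,v',T_v\}\) are local pendants and each \(\{e_u,e_v\}\) gadget sits on an edge of \(G\). The key observation is that \(T\) serves a single purpose: via the \(M\)-parallel edges and Corollary \ref{corollary:m_stop}, it forces all the \(T_v\) into one \(\sim_D\)-equivalence class, against which each \(v\) is compared in defining \(U_0=\{v\in V \mid T\sim_D v\}\). Since all of these edges target the \emph{same} class, we need not preserve the star shape at all: it suffices to replace \(T\) by \emph{any} connected, planar ``reference network'' \(R\) whose vertices are pairwise joined by \(M\)-parallel-edge paths (hence pairwise \(\sim_D\)-equivalent), with each \(T_v\) joined to \(R\) by \(M\) parallel edges.

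Concretely, I would fix a planar embedding of \(G\), choose a spanning tree \(\mathcal{T}\subseteq G\), and build \(R\) as a ``thickened'' copy of \(\mathcal{T}\): place a reference vertex near each \(v\in V(G)\) and along each edge of \(\mathcal{T}\), join consecutive reference vertices by \(M\) parallel edges, and join each \(T_v\) to the reference vertex near \(v\) by \(M\) parallel edges. Because \(\mathcal{T}\) is a planar tree, \(R\) can be drawn as a parallel track following \(\mathcal{T}\) inside the embedding of \(G\), with the \(T_v\) pendants oriented toward \(R\); the \(e\)-gadgets and \(v\)-gadgets are local and introduce no crossings with \(R\). Call the resulting graph \(G''_r\); it is planar whenever \(G\) is, and has size polynomial in that of \(G\).

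The analysis then parallels Lemmas \ref{lemma:rth_gon_Tv}, \ref{lemma:rth_gon_ev}, and \ref{lemma:rth_gon_alpha}. By transitivity of \(\sim_D\) and Corollary \ref{corollary:m_stop}, all of \(R\) together with every \(T_v\) forms a single equivalence class, so \(U_0=\{v\in V\mid v\sim_D R\}\) is well defined and the local computations at each \(\{T_v,v'\}\) and \(\{e_u,e_v\}\) gadget are \emph{verbatim} those of Lemmas \ref{lemma:rth_gon_Tv} and \ref{lemma:rth_gon_ev}, since \(R\) attaches only to the \(T_v\) (exactly as \(T\) did) and creates no new interaction with \(v\), \(v'\), \(e_u\), or \(e_v\). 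The one new ingredient is the chip count on \(R\): placing \(r\) units of debt on a single reference vertex \(w\) and applying Lemma \ref{lemma:no_debt_increase}, one sees that every neighbor of \(w\) is \(\sim_D w\) (its reference neighbors via the backbone, its \(T_v\)-neighbors via Corollary \ref{corollary:m_stop}), so the debt on \(w\) cannot be relieved; hence \(D(w)\ge r\) for each of the \(|R|\) reference vertices. Together with Lemma \ref{lemma:rth_gon_components} this yields a lower bound, and an explicit divisor (the analog of the one in Lemma \ref{lemma:rth_gon_alpha}, now also placing \(r\) chips on each reference vertex and including \(R\) in the firing sets) matches it, giving
\[
\dgon_r(G''_r) = r\,|R| + (3r+1)|V| + (2r-1)|E| - \alpha(G),
\]
where \(|R|\) is a computable function of \(G\). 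NP-hardness of \textsc{\(r^{th}\) Divisorial Gonality} on planar graphs follows. For the stable version one argues as in the proof of Theorem \ref{theorem:r_hard_stable}, using that every uniform subdivision \(\sigma_k(G''_r)\) is again planar.

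I expect the main obstacle to be the two verifications that the sketch glosses over. The first is producing, for an \emph{arbitrary} planar \(G\), a provably crossing-free embedding of the reference network together with all gadgets; this requires a careful argument with the rotation system of the fixed embedding so that the backbone can branch at tree-vertices of high degree and reach each \(T_v\) without meeting an \(e\)-gadget. The second is confirming the explicit rank-\(r\) divisor: the firing sets \(U\) and \((W_i)^C\) from Lemma \ref{lemma:rth_gon_alpha} must be extended to include \(R\), and one must recheck that firing them moves chips only along the intended edges and never introduces debt, so that the \(r\,|R|\) term is exactly tight. The principal risk throughout is that the backbone might open an unintended, low-cost route for relieving debt that would depress the gonality below the target; this is controlled by the fact that \(R\) meets the rest of \(G''_r\) only at the \(T_v\), precisely as \(T\) did, so no interaction absent from the original construction can arise.
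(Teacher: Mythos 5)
Note first that the paper does not prove this statement: it is posed there as an open conjecture, so your proposal is an attempt at new mathematics rather than a reconstruction of an existing argument. Unfortunately, it has a fatal gap: the graph \(G''_r\) you build is not planar for general planar \(G\), and no amount of care with rotation systems can fix this. Your reference network \(R\) is connected and meets the rest of \(G''_r\) only in the edges joining it to the vertices \(T_v\) --- exactly the property you cite as a safeguard at the end. But then contract all of \(R\) to a single vertex \(\rho\), contract each pendant path \(v - v' - T_v\) into \(v\), and contract each \(e_u\) and \(e_v\) into its neighboring original vertex: what remains is \(G\) together with a universal vertex \(\rho\) adjacent to every \(v\in V(G)\), i.e.\ the cone over \(G\), as a minor of \(G''_r\). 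Since planarity is closed under taking minors, \(G''_r\) can be planar only if the cone over \(G\) is planar, which happens only when \(G\) is outerplanar. Concretely, take \(G=K_4\) (planar) with \(\mathcal{T}\) the star centered at one vertex: the contractions above turn your \(G''_r\) into \(K_5\), so the ``provably crossing-free embedding'' you hope to produce does not exist even in this small case. Since maximum independent set is polynomial-time solvable on outerplanar graphs (they have bounded treewidth), the only inputs for which your reduction produces a planar graph are ones on which the source problem is easy, and no NP-hardness follows.

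The obstruction is inherent to your approach, not to your particular choice of backbone. The counting in Lemma \ref{lemma:rth_gon_alpha} needs transitivity: if \(u,v\in U_0\) are adjacent in \(G\), one must conclude \(u\sim_D v\), so that the gadget \(\{e_u,e_v\}\) costs \(2r\) rather than \(2r-1\); this is precisely what turns the savings \(|U_0|-|E[U_0]|\) into a quantity bounded by \(\alpha(G)\). That forces all reference vertices into a single \(\sim_D\)-class, which in any construction in the spirit of Construction \ref{rth_construction} (via Corollary \ref{corollary:m_stop}) means a \emph{connected} reference structure attached to every vertex gadget --- and connectivity is exactly what produces the apex minor above. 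If instead you allow \(R\) to be disconnected to preserve planarity, the transitivity step collapses: adjacent vertices \(u,v\) equivalent to different components of \(R\) need not be equivalent to each other, so each could save a chip on its \(\{T_v,v'\}\) gadget with no compensating penalty on \(\{e_u,e_v\}\), and the lower bound breaks. Any proof of the conjecture therefore needs a genuinely different mechanism for certifying membership in \(U_0\), not a planarization of the existing construction; this is presumably why the authors left the planar case open.
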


\section{Higher gonality on metric graphs}
\label{section:higher_metric}

For any fixed \(r\geq 1\), we define the \textsc{\(r^{th}\) Metric Divisorial Gonality} problem as follows.

\begin{itemize}
  \item[] \textsc{\(r^{th}\) Metric Divisorial Gonality}
  \item[] \textbf{Input:} a metric graph \(\Gamma=(G,l)\) and an integer \(k\leq r|V(G)|\).
  \item[] \textbf{Question:} is \(\dgon_r(\Gamma)\leq k\)?
\end{itemize}

Given a finite graph \(G\), recall that \(\Gamma(G)\) is the metric graph obtained by assigning edge lengths of \(1\) to each edge of \(G\).  Recall further that for \(k\geq 1\),  \(\sigma_k(G)\) denotes the finite graph obtained by subdividing each edge into \(k\) edges. 
We are now ready to prove that our metric problem is NP-hard.

\begin{proof}[Proof of Theorem \ref{theorem:r_hard_metric}] Let \(G=(V,E)\) be a finite connected graph, and let \(G'_r\) be the graph from Construction \ref{rth_construction}.  By Lemma \ref{lemma:rth_gon_alpha} and the argument of the proof of Theorem \ref{theorem:r_hard_stable}, we have that \(\sdgon_r(G'_r)\) and \(\dgon_r(G'_r)\) are both equal to
\[r + (3r+1)|V| + (2r-1)|E| - \alpha(G).\]
By Proposition \ref{proposition:dsw_corollary}, we know
\[\sdgon_r(G)\leq \dgon_r(\Gamma(G))\leq \dgon_r(G).\] Thus \(\dgon_r(\Gamma(G))\) is bounded above and below by the same number, and so must be equal to it.  It follows that if we can upper bound the \(r^{th}\) divisorial gonality of a metric graph efficiently, we can efficiently lower bound \(\alpha(G)\).  We conclude that the problem is NP-hard.
\end{proof}

\section{Other complexity classes}\label{section:complexity}

We close with results pertaining to other computational complexity classes, namely APX-hard problems and NP-complete problems.  We will prove that each variation of higher divisorial gonality is hard even to approximate; that is, that the \textsc{\(r^{th}\) Divisorial Gonality} problem, the \textsc{\(r^{th}\) Stable Divisorial Gonality}, and the \textsc{\(r^{th}\) Metric Divisorial Gonality} problem are all APX-hard.  Our proof closely follows those of \cite[Theorems 3.10 and 3.11]{GSW}, and relies on showing that being able to construct a ``good'' divisor of rank \(r\) (that is, one with degree close to the \(r^{th}\) gonality) allows one to quickly construct a ``good'' independent set (that is, one with a number of elements close to \(\alpha(G)\)).
We recall the following result.

\begin{lemma}[Corollary 3.8 in \cite{GSW}]\label{lemma:quick_equivalence} Given an effective divisor \(D\) on a graph \(G\) and any two vertices \(u,v\in V(G)\), we can determine in polynomial time if \(u\sim_Dv\).
    
\end{lemma}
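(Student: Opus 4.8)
The final statement is Lemma~\ref{lemma:quick_equivalence}, which asserts that given an effective divisor $D$ and two vertices $u,v$, we can decide in polynomial time whether $u\sim_D v$. The plan is to reduce this question to a sequence of polynomially many feasibility checks for chip-firing without debt, each of which reduces in turn to a minimum cut (equivalently, maximum flow) computation. The characterization to exploit is Lemma~\ref{lemma:rth_gon_dstopping}: $u\nsim_D v$ if and only if there is an effective $D'\sim D$ and a subset $U\subset V(G)$ with $u\in U$, $v\notin U$, such that subset-firing $U$ from $D'$ introduces no debt. So deciding $u\sim_D v$ amounts to searching for such a witness pair $(D',U)$, and the goal is to organize this search so it runs in polynomial time.

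First I would recall the standard reduction from the reachability question to a reduced-divisor (or Dhar's burning) computation. The key combinatorial fact is that whether one can fire a set separating $u$ from $v$ without creating debt can be tested by a greedy ``burning'' procedure rooted at $v$: starting a fire at $v$, an edge burns into a vertex $w$ once the number of burnt edges incident to $w$ exceeds $D(w)$, and one iterates until the fire stabilizes. The complement of the burnt set is precisely the maximal subset that can be safely fired, and $u\sim_D v$ holds exactly when this procedure burns all of $V(G)$ (so that no separating firing set exists). Thus the algorithm is: run Dhar's burning algorithm from $v$ on the divisor $D$ and check whether $u$ ends up burnt. This single burning computation terminates in a number of rounds bounded by $|V(G)|$, with each round inspecting each edge a constant number of times, giving an overall polynomial running time.

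The one subtlety is that Lemma~\ref{lemma:rth_gon_dstopping} quantifies over all effective $D'\sim D$, not just $D$ itself, so I would need to argue that it suffices to work with a single canonical representative. The clean way to handle this is to first replace $D$ by the $v$-reduced divisor $D_v$ equivalent to $D$, which can be computed in polynomial time by the standard reduction algorithm (iterated burning with firing of the unburnt set). A divisor is $v$-reduced precisely when Dhar's burning from $v$ burns the entire graph; and the reachability of debt-free firings separating $u$ from $v$ depends only on the equivalence class of $D$, as the relation $\sim_D$ is an invariant of that class. Hence $u\sim_D v$ if and only if $u$ remains in debt-incapable position relative to the $v$-reduced representative, which the burning procedure detects directly.

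The main obstacle I anticipate is bookkeeping the running-time bound carefully: confirming that the reduction to the $v$-reduced divisor, not merely one burning pass, terminates in polynomially many firing rounds (this follows from monotonicity of the burning process and the fact that degrees and coefficients are polynomially bounded in the input, since $\deg(D)$ is), and then verifying that the final burning check correctly certifies equivalence via Lemma~\ref{lemma:rth_gon_dstopping}. Since this is exactly Corollary~3.8 of \cite{GSW}, I would lean on their established analysis of Dhar's algorithm rather than re-deriving the flow/cut correspondence from scratch, citing it for both correctness and the polynomial time bound.
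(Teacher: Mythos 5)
The paper itself offers no proof of this lemma: it is imported verbatim as Corollary~3.8 of \cite{GSW}, so the citation in your closing paragraph is exactly what the paper does. However, the algorithm you sketch as a reconstruction is wrong, and the error is substantive rather than cosmetic. Your central claim is that \(u\sim_D v\) holds exactly when Dhar's burning procedure started at \(v\) burns all of \(V(G)\), i.e.\ that it suffices to check whether \(u\) ends up burnt. This one-sided check cannot decide the relation. Take \(G\) to be a single edge joining \(u\) and \(v\), and \(D=v\) (one chip on \(v\), none on \(u\)). Burning from \(v\) immediately burns \(u\), and \(D\) is already \(v\)-reduced, so your algorithm reports \(u\sim_D v\); but in fact \(u\nsim_D v\), since the script \(\sigma=\mathds{1}_{\{v\}}\) satisfies \(D-L\sigma=u\geq 0\) with \(\sigma(u)=0\neq 1=\sigma(v)\). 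In the language of Lemma~\ref{lemma:rth_gon_dstopping}, the witness pair is the other effective representative \(D'=u\) together with \(U=\{u\}\): witnesses may live at a different effective divisor in the class, and the separating firing set available from \(D\) itself contains \(v\) rather than \(u\). Your proposed repair---first replace \(D\) by the \(v\)-reduced divisor \(D_v\) and then burn---is vacuous by your own observation: a divisor is \(v\)-reduced precisely when burning from \(v\) burns everything, so after this replacement the check answers ``\(u\sim_D v\)'' on every input.

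The missing idea is that the test must be symmetric in \(u\) and \(v\), because valid scripts can separate the two vertices in either direction. One correct reduced-divisor formulation is: \(u\sim_D v\) if and only if \(D_u=D_v\), where \(D_u,D_v\) are the \(u\)-reduced and \(v\)-reduced divisors equivalent to \(D\). For the backward direction, suppose \(R:=D_u=D_v\) and let \(\sigma\) be any script with \(R-L\sigma\geq 0\), normalized so \(\min\sigma=0\); if \(\sigma\) is nonconstant, its top level set \(\{w:\sigma(w)=\max\sigma\}\) is a proper nonempty subset firable from \(R\) without debt (by Lemma~\ref{lemma:no_debt_increase}), so reducedness at \(v\) forces \(v\) into it and reducedness at \(u\) forces \(u\) into it; either way \(\sigma(u)=\sigma(v)\), and since \(\sim_D\) is (as you correctly note) an invariant of the equivalence class, \(u\sim_D v\). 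For the forward direction, if \(D_u\neq D_v\), the nonconstant script \(\rho\geq 0\) with \(D_u=D_v-L\rho\) and \(\min\rho=0\) satisfies \(\rho(v)=\max\rho\) (\(v\)-reducedness of \(D_v\)) and \(\rho(u)=0\) (\(u\)-reducedness of \(D_u\), applied to the reversed script \((\max\rho)\mathds{1}_{V(G)}-\rho\)), so \(\rho\) itself exhibits \(u\nsim_D v\). Since both reduced divisors can be computed in polynomial time by iterated burning-and-firing, this gives the lemma; equivalently, one can run the two greedy reductions toward \(D_v\) and toward \(D_u\) and record whether \(u\), respectively \(v\), is ever fired. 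Either way, some two-sided bookkeeping of this kind (or simply retaining the citation to \cite{GSW}, as the paper does) is needed; a single rooted burning pass is not enough.
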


Our next lemma is a generalization of \cite[Lemma 3.9]{GSW}.

\begin{lemma}\label{lemma:for_apx}
Let \(G\) be a subcubic graph, and let \(G_r'\) be as in Construction \ref{rth_construction}.  Let \(D\) be an effective divisor on \(G_r'\) of rank at least \(r\) with \(\deg(D)\leq (1+\varepsilon)\dgon_r(G_r')\).  Then we can construct in polynomial time an independent set on \(G\) of size at least \((1-(25r-3)\varepsilon)\alpha(G)\).
\end{lemma}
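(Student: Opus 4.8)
The plan is to reverse-engineer the divisor-to-independent-set direction that was only used implicitly in the equality proof of Lemma \ref{lemma:rth_gon_alpha}. Recall that in that lemma, the lower bound argument set $U_0 = \{v \in V \mid T \sim_D v\}$ and derived $\deg(D) \geq r + (3r+1)|V| + (2r-1)|E| - |U_0| + |E[U_0]|$, using the per-subgraph chip counts from Lemmas \ref{lemma:rth_gon_Tv} and \ref{lemma:rth_gon_ev}. The key observation is that $|U_0| - |E[U_0]| = |U_0| - |E[U_0]|$ is exactly a lower bound on how large an independent set we can extract from $U_0$: by deleting one endpoint from each edge of $G[U_0]$, we obtain an independent set of size at least $|U_0| - |E[U_0]|$. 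So the first step is to compute $U_0$ in polynomial time, which we can do by Lemma \ref{lemma:quick_equivalence} (checking $T \sim_D v$ for each $v$), and then greedily delete endpoints to produce an independent set $S$ with $|S| \geq |U_0| - |E[U_0]|$.

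The heart of the argument is then to show $|U_0| - |E[U_0]|$ is close to $\alpha(G)$ when $\deg(D)$ is close to $\dgon_r(G_r')$. First I would rearrange the chip-count inequality to read $|U_0| - |E[U_0]| \geq r + (3r+1)|V| + (2r-1)|E| - \deg(D)$. Using $\deg(D) \leq (1+\varepsilon)\dgon_r(G_r')$ and $\dgon_r(G_r') = r + (3r+1)|V| + (2r-1)|E| - \alpha(G)$ from Lemma \ref{lemma:rth_gon_alpha}, substitution gives $|U_0| - |E[U_0]| \geq \alpha(G) - \varepsilon \dgon_r(G_r')$. The remaining work is to bound $\dgon_r(G_r')$ by a constant multiple of $\alpha(G)$, so that $\varepsilon \dgon_r(G_r')$ becomes $(\text{const} \cdot r)\varepsilon\,\alpha(G)$, matching the claimed coefficient $25r - 3$. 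This is where the subcubic hypothesis enters: for a subcubic graph we have $|E| \leq \tfrac{3}{2}|V|$, and $\alpha(G) \geq \tfrac{1}{4}|V|$ (every subcubic graph has an independent set containing at least a quarter of its vertices, e.g.\ by a greedy degree argument). Feeding $|V| \leq 4\alpha(G)$ and $|E| \leq 6\alpha(G)$ into $\dgon_r(G_r') = r + (3r+1)|V| + (2r-1)|E| - \alpha(G)$ should yield $\dgon_r(G_r') \leq (25r - 3)\alpha(G)$ after collecting terms, giving $|S| \geq \alpha(G) - (25r-3)\varepsilon\,\alpha(G) = (1 - (25r-3)\varepsilon)\alpha(G)$.

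The main obstacle I anticipate is making the constant $25r - 3$ come out exactly right rather than merely obtaining \emph{some} linear-in-$r$ bound. This requires pinning down the precise inequalities $|V| \leq 4\alpha(G)$ and $|E| \leq 6\alpha(G)$ (or whatever the correct subcubic bounds are) and verifying that the leading-order arithmetic $(3r+1)\cdot 4 + (2r-1)\cdot 6 - 1 = 12r + 4 + 12r - 6 - 1 = 24r - 3$ plus the stray $+r$ term accounts for the $25r$. I would double-check whether the $r + \cdots$ constant term and the $-\alpha(G)$ contribute an extra $+r$ and $-1$ respectively, reconciling $24r - 3 + r = 25r - 3$. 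A secondary subtlety is confirming that the chip-count lower bounds of Lemmas \ref{lemma:rth_gon_Tv} and \ref{lemma:rth_gon_ev} apply to \emph{any} rank-$r$ divisor $D$, not only to one of minimal degree; a careful reading shows these lemmas only use $r(D) \geq r$ together with the $M$-parallel edges being $D$-stopping, and the latter follows from Corollary \ref{corollary:m_stop} once $\deg(D) < M$, which holds here since $\deg(D) \leq (1+\varepsilon)\dgon_r(G_r') \leq r|V(G_r')| \cdot (1+\varepsilon) < M$ for the relevant range of $\varepsilon$. Everything else is bookkeeping, and the polynomial-time claim follows since computing $U_0$, pruning $G[U_0]$, and the arithmetic are all efficient.
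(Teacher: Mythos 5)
Your proposal follows the paper's proof essentially step for step: compute $U_0=\{v\in V \mid T\sim_D v\}$ via Lemma \ref{lemma:quick_equivalence}, delete one endpoint per edge of $G[U_0]$ to get an independent set $S$ with $|S|\geq |U_0|-|E[U_0]|$, combine the chip-count lower bound $\deg(D)\geq r+(3r+1)|V|+(2r-1)|E|-|U_0|+|E[U_0]|$ with the exact formula of Lemma \ref{lemma:rth_gon_alpha}, and finish with the subcubic bounds $|V|\leq 4\alpha(G)$, $|E|\leq \frac{3}{2}|V|\leq 6\alpha(G)$ together with $r\leq r\alpha(G)$ to reconcile $24r-3+r=25r-3$. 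All of that matches the paper, and your arithmetic is correct.

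The one genuine flaw is your justification of the preliminary step that the $M$-parallel edges are $D$-stopping, which is what licenses applying the chip-count lemmas to a non-minimal-degree divisor. You write $\deg(D)\leq (1+\varepsilon)\dgon_r(G_r')\leq r|V(G_r')|(1+\varepsilon)<M$ ``for the relevant range of $\varepsilon$,'' but $M=r|V(G_r')|+1$, so the inequality $r|V(G_r')|(1+\varepsilon)<M$ holds only when $\varepsilon<1/(r|V(G_r')|)$ --- a range that shrinks with the size of the graph and is far smaller than the range $\varepsilon<1/(25r-3)$ on which the lemma has nontrivial content. As written, this step fails for, say, $\varepsilon=1/(50r)$ and $G$ large. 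The paper sidesteps the issue cleanly: it assumes without loss of generality that $\deg(D)\leq r|V(G_r')|$, since otherwise $D$ may be replaced by the divisor with $r$ chips on every vertex, which still has rank at least $r$, has smaller degree (so still satisfies the hypothesis $\deg\leq(1+\varepsilon)\dgon_r(G_r')$), and has degree less than $M$. Alternatively, your route can be salvaged with a sharper estimate: for connected $G$ one has $\dgon_r(G_r')=(M-1)+|V|-|E|-\alpha(G)\leq M-\alpha(G)$, and in the nontrivial range $\varepsilon(25r-3)<1$ your own bound $\dgon_r(G_r')\leq(25r-3)\alpha(G)$ gives $\varepsilon\,\dgon_r(G_r')<\alpha(G)$, hence $(1+\varepsilon)\dgon_r(G_r')<M$. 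Some such argument, or the paper's replacement trick, is needed; the inequality you wrote does not do the job.
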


\begin{proof} We will assume that \(\deg(D)\leq r|V(G_r')|\); otherwise we may replace it with a divisor that has \(r\) chips on each vertex.

As in the proof of Lemma \ref{lemma:rth_gon_alpha}, let \(U_0\) be the set of vertices in \(V(G_r')\) that are equivalent to \(T\); by Lemma \ref{lemma:quick_equivalence}, we can find \(U_0\) in polynomial time. By the proof of Lemma \ref{lemma:rth_gon_alpha}, we know
\[\deg(D)\geq r+(3r+1)|V|+(2r-1)|E|-|U_0|+|E(U_0)|.\]
We construct an independent set \(S\) as follows:  start with the set \(U_0\), and for every edge in \(G[U_0]\) delete one of its endpoints.  This gives an independent set of size at least \(|U_0|-|E(U_0)|\).  Thus, we have
\begin{align*}
    |S|\geq&|U_0|-|E(U_0)|
    \\\geq &r+(3r+1)|V|+(2r-1)|E|-\deg(D)
    \\\geq &r+(3r+1)|V|+(2r-1)|E|-(1+\varepsilon)\dgon_r(G)
    \\=&r+(3r+1)|V|+(2r-1)|E|-(1+\varepsilon)(r+(3r+1)|V|+(2r-1)|E|-\alpha(G))
    \\=&\alpha(G)-\varepsilon(r+(3r+1)|V|+(2r-1)|E|-\alpha(G)).
\end{align*}
Since \(G\) is subcubic, we have \(|V|\leq4\alpha(G)\) and \(|E|\leq \frac{3}{2}|V|\leq 6\alpha(G)\).  We therefore have
\begin{align*}|S|\geq& \alpha(G)-\varepsilon(r+4(3r+1)\alpha(G)+6(2r-1)\alpha(G)-\alpha(G))
\\=& \alpha(G)-\varepsilon(r+(24r-3)\alpha(G))
\\\geq& \alpha(G)-\varepsilon(r\alpha(G)+(24r-3)\alpha(G))
\\=&(1-(25r-3)\varepsilon)\alpha(G).
\end{align*}
Thus we can construct, in polynomial time, an independent set of the claimed size.
\end{proof}

\begin{theorem}
For any fixed \(r\), the following problems are all APX-hard: \textsc{\(r^{th}\) Divisorial Gonality}, \textsc{\(r^{th}\) Stable Divisorial Gonality} , and  \textsc{\(r^{th}\) Metric Divisorial Gonality}.
\end{theorem}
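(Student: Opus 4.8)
The plan is to exhibit an L-reduction from the problem of computing the independence number of a subcubic graph, which is APX-hard \cite{alimonti_kann}, to each of our three gonality problems; since \(r\) is fixed, all constants in the reduction will be absolute, and APX-hardness will follow. Throughout I take the instance map to be \(G\mapsto G_r'\) (together with its metric realization \(\Gamma(G_r')\) in the metric case), which is computable in polynomial time and, by Construction \ref{rth_construction}, increases the size only polynomially.

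For the finite problem, recall from Lemma \ref{lemma:rth_gon_alpha} that
\[\dgon_r(G_r')=r+(3r+1)|V|+(2r-1)|E|-\alpha(G),\]
so that lower gonality corresponds to larger independent sets. The first L-reduction condition asks that \(\dgon_r(G_r')\leq\beta\,\alpha(G)\) for some constant \(\beta\): this is immediate from the subcubic bounds \(|V|\leq4\alpha(G)\) and \(|E|\leq6\alpha(G)\) used in the proof of Lemma \ref{lemma:for_apx}, which give \(\dgon_r(G_r')\leq\bigl(r+4(3r+1)+6(2r-1)\bigr)\alpha(G)\). For the solution map, I send an effective rank-\(r\) divisor \(D\) on \(G_r'\) to the independent set \(S\) built from \(U_0=\{v\in V\mid T\sim_D v\}\) exactly as in Lemma \ref{lemma:for_apx}; by Lemma \ref{lemma:quick_equivalence} this runs in polynomial time. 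The key estimate, already established inside the proof of Lemma \ref{lemma:for_apx}, is
\[|S|\ \geq\ r+(3r+1)|V|+(2r-1)|E|-\deg(D).\]
Subtracting this from the formula \(\alpha(G)=r+(3r+1)|V|+(2r-1)|E|-\dgon_r(G_r')\) yields the second L-reduction condition with constant \(1\):
\[\alpha(G)-|S|\ \leq\ \deg(D)-\dgon_r(G_r').\]
Thus the pair of maps is an L-reduction, and \textsc{\(r^{th}\) Divisorial Gonality} is APX-hard.

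For the stable and metric problems I would run the same reduction, exploiting that all three gonalities of \(G_r'\) agree: by Theorem \ref{theorem:r_hard_stable} we have \(\sdgon_r(G_r')=\dgon_r(G_r')\), and by the argument of Theorem \ref{theorem:r_hard_metric} (via Proposition \ref{proposition:dsw_corollary}) we have \(\dgon_r(\Gamma(G_r'))=\dgon_r(G_r')\) as well. Hence the instance map and the target optimum are unchanged, and only the solution map must be adapted. In the stable case a near-optimal solution is an effective rank-\(r\) divisor \(D\) on some subdivision \(G_r''\); repeating the lower-bound computation of Theorem \ref{theorem:r_hard_stable} (which uses Lemma \ref{lemma:rth_sgon_dstopping} in place of the component lemma) gives the same inequality \(\deg(D)\geq r+(3r+1)|V|+(2r-1)|E|-|U_0|+|E[U_0]|\), so the identical \(U_0\)-to-\(S\) construction produces an independent set obeying both L-reduction conditions. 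In the metric case I would invoke the DSW identity \(\dgon_r(\Gamma(G_r'))=\min_k\dgon_r(\sigma_k(G_r'))\) to realize a near-optimal metric divisor on a uniform subdivision of \(G_r'\), reducing it to the stable case.

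The main obstacle is the solution map in the stable and metric settings, namely extracting \(U_0\) and the independent set in genuine polynomial time. For a divisor on a subdivision \(G_r''\) this requires that \(G_r''\) have polynomially bounded size and that Lemma \ref{lemma:quick_equivalence} apply to it directly; for a metric divisor one must first pass to a uniform subdivision of polynomial size carrying every chip at a vertex, and confirm that the equivalence \(T\sim_D v\) can be decided there. Checking that these recoveries preserve the claimed approximation factor — and in particular that the passage through \(\sigma_k(G_r')\) does not inflate the degree — is the step demanding the most care; the remainder is a direct transcription of the finite argument.
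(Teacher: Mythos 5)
Your reduction for the finite problem is, in substance, identical to the paper's: same source problem (maximum independent set on subcubic graphs), same instance map \(G\mapsto G_r'\), and the same solution map sending a rank-\(r\) divisor \(D\) to the independent set extracted from \(U_0\) via Lemma \ref{lemma:quick_equivalence}. Your two L-reduction conditions are exactly the content of the paper's Lemma \ref{lemma:for_apx}: the paper combines your additive estimate \(\alpha(G)-|S|\leq \deg(D)-\dgon_r(G_r')\) with the subcubic bounds \(|V|\leq 4\alpha(G)\), \(|E|\leq 6\alpha(G)\) to state the multiplicative version \(\deg(D)\leq(1+\varepsilon)\dgon_r(G_r')\Rightarrow|S|\geq(1-(25r-3)\varepsilon)\alpha(G)\); the two formulations are interchangeable. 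Where you genuinely diverge is the stable and metric cases. The paper takes a shorter route: since \(\dgon_r(G_r')=\sdgon_r(G_r')=\dgon_r(\Gamma(G_r'))\), it simply substitutes these equal optima into the hypothesis of Lemma \ref{lemma:for_apx}, keeping the witness divisor on the finite graph \(G_r'\) itself --- in effect arguing about approximation of the optimal \emph{value} rather than mapping arbitrary feasible solutions (divisors on subdivisions, or metric divisors) back to independent sets. You instead insist on honest solution maps and sketch how to build them: rerunning the lower-bound computation of Theorem \ref{theorem:r_hard_stable} (via Lemma \ref{lemma:rth_sgon_dstopping}) on a subdivision, and passing from a metric divisor to a uniform subdivision via the identity \(\dgon_r(\Gamma(G))=\min_k\dgon_r(\sigma_k(G))\). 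The obstacles you flag --- polynomial bounds on the size of the subdivision, applying Lemma \ref{lemma:quick_equivalence} there, and locating the chips of a metric divisor at rational points --- are real, but the paper does not address them either; its two-line treatment of the stable and metric cases is no more complete on these points than yours. So your argument is at least as rigorous as the published one, and carrying out your solution-map analysis in full would actually strengthen it; what each approach buys is clear: the paper's value-substitution is shorter, while your explicit L-reduction is the formally standard route to APX-hardness and localizes precisely what remains to be checked.
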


\begin{proof}
The independent set problem is known to be APX-hard, even for subcubic graphs \cite{apx_cubic}.  Let \(G\) be a subcubic graph, and construct \(G_r'\) as usual. Using Lemma \ref{lemma:for_apx} and recalling that \(r\) is fixed, we find that \textsc{\(r^{th}\) Divisorial Gonality} is APX-hard.

Since we have
\[\dgon_r(G_r')=\sdgon_r(G_r')=\dgon_r(\Gamma(G_r')),\]
Lemma \ref{lemma:for_apx} holds if we replace \(\dgon_r(G_r')\) with either of \(\sdgon_r(G_r')\) or \(\dgon_r(\Gamma(G_r'))\).  The same  argument then lets us conclude that \textsc{\(r^{th}\) Stable Divisorial Gonality} and  \textsc{\(r^{th}\) Metric Divisorial Gonality} are APX-hard as well.
\end{proof}

We now turn to the question of NP-completeness.  Recall that a problem is NP-complete if in addition to being NP-hard, it is also in NP, meaning that a positive instance of the problem has a certificate that can be verified in polynomial time.

\begin{proposition}
The \textsc{$r^{th}$ Divisorial Gonality} problem is NP-complete.
\end{proposition}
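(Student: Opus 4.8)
The plan is to show that \textsc{\(r^{th}\) Divisorial Gonality} lies in NP by exhibiting a polynomial-size certificate for positive instances, since NP-hardness is already established in Theorem \ref{theorem:r_hard_finite}. The natural certificate for a ``yes'' answer to the question ``is \(\dgon_r(G)\leq k\)?'' is a single effective divisor \(D\) of degree at most \(k\) together with a proof that \(r(D)\geq r\). The input bound \(k\leq r|V|\) guarantees that \(D\) itself has polynomial size, so the only issue is certifying the rank bound efficiently.

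The main obstacle is that verifying \(r(D)\geq r\) appears to require checking that \(|D-E|\neq\emptyset\) for \emph{every} effective divisor \(E\) of degree \(r\), of which there are polynomially many only because \(r\) is fixed: there are \(\binom{|V|+r-1}{r}=O(|V|^r)\) such \(E\), a polynomial number for each fixed \(r\). First I would enumerate all these effective divisors \(E\) of degree exactly \(r\); for each one I must decide whether \(D-E\) is equivalent to an effective divisor. This is exactly the problem of testing whether a divisor has nonempty linear system, which is decidable in polynomial time via Dhar's burning algorithm (equivalently, by computing the \(q\)-reduced representative of \(D-E\) for a fixed vertex \(q\) and checking that its coefficient at \(q\) is nonnegative and all other coefficients are nonnegative). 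Thus for each fixed \(r\) the certificate can be verified in time \(O(|V|^r)\cdot\mathrm{poly}(|V|,|E(G)|)\), which is polynomial.

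Concretely, the verification procedure I would describe is: given the certificate \(D\), (i) check \(D\geq 0\) and \(\deg(D)\leq k\); (ii) enumerate each effective \(E\) with \(\deg(E)=r\); (iii) for each such \(E\), compute the reduced divisor of \(D-E\) with respect to a fixed base vertex and confirm it is effective, equivalently that \(|D-E|\neq\emptyset\). If all these checks pass, then by definition \(r(D)\geq r\), and since \(\deg(D)\leq k\) we conclude \(\dgon_r(G)\leq k\); conversely, if \(\dgon_r(G)\leq k\) then any rank-\(r\) divisor of degree at most \(k\) serves as a valid certificate. I would remark that the reduction of a divisor to its \(q\)-reduced form, and hence the emptiness test for \(|D-E|\), runs in polynomial time, citing the standard reference for Dhar's burning algorithm. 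Combined with the NP-hardness from Theorem \ref{theorem:r_hard_finite}, this establishes NP-completeness.
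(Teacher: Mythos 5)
Your proposal is correct and matches the paper's proof essentially step for step: both establish membership in NP by certifying with a rank-\(r\) divisor \(D\) of degree at most \(k\), enumerating the \(O(|V|^r)\) effective divisors \(E\) of degree \(r\) (polynomial since \(r\) is fixed), and testing \(|D-E|\neq\emptyset\) for each via Dhar's burning algorithm. The only difference is cosmetic — you spell out the \(q\)-reduced divisor formulation of the emptiness test, which the paper delegates to a citation.
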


\begin{proof}
By Theorem \ref{theorem:r_hard_finite}, this problem is NP-hard, so it remains to show that it is in NP.  Take as a certificate to a ``yes'' instance of the problem a divisor \(D\) of degree \(k\) with rank at least \(r\).  We must show that there exists a polynomial-time algorithm to check that \(r(D)\geq r\).

Let \(n=|V(G)|\).  We note that the number of effective divisors \(E\) of degree \(r\) on \(G\) is equal to the number of ways to place \(r\) identical objects into \(n\) distinct bins, namely \({n+r-1\choose n-1}\). Since \(r\) is fixed, we have \({n+r-1\choose n-1}=O(n^r)\); that is, there are polynomially many divisors of degree \(r\) on \(G\).  Thus it suffices to show that there is a polynomial time algorithm to check whether \(D-E\) is equivalent to an effective divisor for an arbitrary effective divisor \(E\) of degree \(r\). Such an algorithm is furnished by Dhar's burning algorithm \cite{dhar} or one of its modifications, see e.g. \cite[Corollary 6.5]{gonseq}.
\end{proof}

To our knowledge it is currently an open question whether  \textsc{\(r^{th}\) Stable Divisorial Gonality}  is in NP for all \(r\).  It was proved in \cite{stable_gonality_in_np} that the answer is ``yes'' for \(r=1\), and it may be that their techniques could be adapted to prove it for general \(r\).
In \cite[Remark 3.6]{DSW}, it is argued that for fixed \(r\) and \(k\), determining whether a metric graph \(\Gamma=(G,l)\) with rational edge lengths satisfies \(\dgon_r(\Gamma)\leq k\) is in NP. If one could push this argument to where \(k\) is not fixed, say to where it is bounded by \(r|V(G)|\), this could be used to argue that \textsc{\(r^{th}\) Metric Divisorial Gonality} is in NP for metric graphs of rational lengths.  These observations motivate the following conjectures.

\begin{conjecture}
The \textsc{\(r^{th}\) Stable Divisorial Gonality} problem is NP-complete.
\end{conjecture}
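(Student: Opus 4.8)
The plan is to establish NP-completeness by proving membership in NP, since NP-hardness is already furnished by Theorem \ref{theorem:r_hard_stable}. For a ``yes'' instance (a graph \(G\) and an integer \(k\) with \(\sdgon_r(G)\le k\)), the natural certificate is a pair \((H,D)\), where \(H\) is a subdivision of \(G\) and \(D\) is a divisor on \(H\) with \(\deg(D)\le k\) and \(r(D)\ge r\). Two things must be shown: that such a certificate can be taken to have polynomial size, and that its validity can be verified in polynomial time. Note that we only need a certificate for ``yes'' instances, so we need not certify optimality, merely the existence of a good divisor on some subdivision.

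Verification, given a fixed \(H\), mirrors the NP-completeness proof for \textsc{\(r^{th}\) Divisorial Gonality}. Writing \(N=|V(H)|\), the number of effective divisors of degree \(r\) on \(H\) is \(\binom{N+r-1}{N-1}=O(N^r)\), which is polynomial in \(N\) because \(r\) is fixed; for each such \(E\) we test whether \(D-E\) is equivalent to an effective divisor using Dhar's burning algorithm \cite{dhar}. Checking \(\deg(D)\le k\) and that \(H\) is a subdivision of \(G\) is immediate. Hence, provided \(N\) is polynomially bounded in the input size, the entire certificate is verifiable in polynomial time.

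The crux, and the main obstacle, is to show that a minimizing subdivision can be taken to be small: if \(\sdgon_r(G)\le k\), then there is a subdivision \(H\) realizing \(\dgon_r(H)\le k\) in which each edge of \(G\) is subdivided at most polynomially many times (in \(|V(G)|\), \(|E(G)|\), and \(k\)). This is exactly the step where I would adapt the argument of \cite{stable_gonality_in_np}, which handles the case \(r=1\). I would take an optimal divisor \(D\) on an arbitrary minimizing subdivision and, using reduced divisors, push chips along each subdivided edge toward its branch vertices; since \(\deg(D)\le k\), at most \(k\) interior points of any such path can carry chips, so each path decomposes into boundedly many maximal chip-free segments. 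The goal is then to argue that any sufficiently long chip-free segment can be contracted to a short one without changing \(\deg(D)\) or dropping the rank below \(r\): firing across such a segment merely translates chips, so the precise segment length becomes irrelevant once it exceeds a threshold governed by \(k\) and \(r\). Proposition \ref{proposition:dsw_corollary} shows that \(\sdgon_r\) can be strictly smaller than \(\dgon_r(\Gamma(G))\) and is realized by genuinely non-uniform subdivisions, which is precisely why a direct reduction to the metric setting does not suffice and a structural bound of this kind is needed.

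The point where the generalization from \(r=1\) becomes delicate is rank-preservation under contraction. For \(r=1\) one need only verify that a single unit of debt placed anywhere can still be eliminated after shortening a segment; for general \(r\) one must ensure that every effective divisor \(E\) of degree \(r\)—whose debt may be spread across \(r\) distinct locations, possibly on or adjacent to the contracted segments—can still be covered by a valid firing script, invoking Lemma \ref{lemma:quick_equivalence} to test equivalence efficiently. The interacting wavefronts of the firing scripts required to cover debt at several sites simultaneously are what make this local surgery subtle, and controlling them uniformly is the key technical hurdle. I expect that, as in \cite{stable_gonality_in_np}, the relevant threshold grows only polynomially in \(k\) (with the dependence on the fixed \(r\) absorbed into constants), so that the resulting bound on \(|V(H)|\) is polynomial and the certificate has polynomial size, completing the proof that the problem lies in NP and hence is NP-complete.
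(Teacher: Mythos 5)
This statement is left as an open conjecture in the paper: the authors prove only NP-hardness (Theorem \ref{theorem:r_hard_stable}) and explicitly state that membership in NP is unknown for general \(r\), with the case \(r=1\) resolved in \cite{stable_gonality_in_np}. Your proposal has the correct skeleton --- NP-hardness plus NP membership, with a certificate \((H,D)\) consisting of a subdivision and a low-degree rank-\(r\) divisor, verified by enumerating the \(O(N^r)\) effective divisors of degree \(r\) and running Dhar's burning algorithm --- and the verification half is sound \emph{provided} \(|V(H)|\) is polynomially bounded in the input size. But that proviso is the entire content of the open problem, and your proposal does not establish it. The key claim --- that a minimizing subdivision can be replaced by one with polynomially many vertices, via contracting long chip-free segments without dropping the rank below \(r\) --- is stated as a goal (``The goal is then to argue\ldots'', ``I expect that\ldots'') rather than proved. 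The rank-preservation step is exactly where the difficulty sits: for \(r=1\) this contraction argument required the full machinery of \cite{stable_gonality_in_np}, and for general \(r\) one must show that \emph{every} effective divisor \(E\) of degree \(r\), with debt possibly distributed across several contracted segments simultaneously, can still be covered after the surgery. You correctly identify this as the technical hurdle, but identifying a hurdle and clearing it are different things; without a proof of the contraction lemma, there is no bound on certificate size and NP membership does not follow.

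A secondary inaccuracy: you assert that Proposition \ref{proposition:dsw_corollary} shows \(\sdgon_r\) can be \emph{strictly} smaller than \(\dgon_r(\Gamma(G))\) and is realized by non-uniform subdivisions. The proposition only gives the chain of inequalities \(\sdgon_r(G)\le \dgon_r(\Gamma(G))\le \dgon_r(G)\); strictness statements come from \cite{DSW} and are about \(\dgon_r(G)\) versus \(\dgon_r(\Gamma(G))\), not about the first inequality. This does not affect your architecture, but it does mean your justification for why the metric route is insufficient is not actually established by the cited proposition. In sum: your proposal is a reasonable research plan that matches the direction the authors themselves suggest (adapting \cite{stable_gonality_in_np}), but it contains a genuine gap at the decisive step, which is why the paper records this statement as a conjecture rather than a theorem.
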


\begin{conjecture}
The \textsc{\(r^{th}\) Metric Divisorial Gonality} problem is NP-complete.
\end{conjecture}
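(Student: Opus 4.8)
The plan is to prove NP-completeness by combining the NP-hardness already established in Theorem \ref{theorem:r_hard_metric} with a proof that the problem lies in NP; all of the difficulty is in the latter, so I will concentrate there. Since we are working with metric graphs of rational edge lengths, I would first observe that \(\dgon_r\) is invariant under globally rescaling the metric, so after clearing denominators we may assume the edge lengths are integers. Replacing each edge of integer length \(\ell\) by a path of \(\ell\) unit edges lets us regard the input metric graph as \(\Gamma(G')\) for a finite graph \(G'\). One must be careful about the encoding of the lengths here: if they are given in binary, \(G'\) could be exponentially large, so the eventual certificate must avoid building \(G'\) explicitly.

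By \cite[Theorem 1.3]{DSW} we have \(\dgon_r(\Gamma(G'))=\min_{k\in\Z_{>0}}\dgon_r(\sigma_k(G'))\), so the natural certificate for a ``yes'' instance is a pair \((k,D)\) consisting of a subdivision parameter \(k\) and an effective divisor \(D\) on the finite graph \(\sigma_k(G')\) of degree at most the target value and rank at least \(r\). Given such a certificate, verification reduces to confirming \(r(D)\geq r\) on a finite graph, and this can be carried out in polynomial time exactly as in the proof that \textsc{\(r^{th}\) Divisorial Gonality} is NP-complete: there are only \(O(n^r)\) effective divisors \(E\) of degree \(r\), and for each one Dhar's burning algorithm decides in polynomial time whether \(D-E\) is equivalent to an effective divisor. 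Since the target value is bounded by \(r|V(G)|\), the number of chips in \(D\) is polynomially bounded, so the only threat to the certificate's size is the parameter \(k\).

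The hard part, and the reason this is stated as a conjecture, is bounding \(k\). The equality from \cite{DSW} guarantees that the minimum over \(k\) is attained, but gives no a priori polynomial bound on where it is attained, nor on the denominators of the positions of the support points of an optimal rank-\(r\) divisor on \(\Gamma(G')\). As the discussion preceding this conjecture indicates, \cite[Remark 3.6]{DSW} already settles the case of \emph{fixed} \(k\); the crux is to ``push'' that argument to the regime where \(k\) is not fixed but is controlled by a polynomial in the input size. Concretely, I would try to prove that some optimal rank-\(r\) divisor on \(\Gamma(G')\) can be taken to be supported at points whose positions along edges are rationals with denominators bounded polynomially in \(|V(G')|\) and \(r\); together with the universal bound \(\dgon_r(\Gamma(G'))\le r\,|V(G')|\), this would cap the needed \(k\) and produce a polynomial-size certificate. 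Ruling out that arbitrarily fine subdivisions are ever required to realize \(\dgon_r\) on the metric graph is exactly the missing ingredient, and until it is supplied the membership in NP—and hence NP-completeness—remains open.
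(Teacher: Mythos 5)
This statement is a \emph{conjecture} in the paper: the authors give no proof of it, precisely because membership in NP is not known. Your proposal does not prove it either, and to your credit you say so explicitly. What you have done is reduce the conjecture to its known-hard core, and your reduction tracks the paper's own discussion almost exactly: NP-hardness is Theorem \ref{theorem:r_hard_metric}; for membership in NP one would clear denominators (using invariance of \(\dgon_r\) under rescaling the metric), view the input as \(\Gamma(G')\) for a finite graph \(G'\), invoke \(\dgon_r(\Gamma(G'))=\min_{k\in\Z_{>0}}\dgon_r(\sigma_k(G'))\) from \cite{DSW}, and certify a ``yes'' instance by a pair \((k,D)\) verified via the same \(O(n^r)\)-many applications of Dhar's burning algorithm used in the paper's NP-completeness proof for the finite-graph problem. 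This is all sound, and your caution about binary-encoded lengths is well placed.

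The genuine gap — which you name correctly, and which is exactly why the paper states this as a conjecture rather than a theorem — is the absence of any polynomial bound on the subdivision parameter \(k\), equivalently on the denominators of the positions of the support points of some optimal rank-\(r\) divisor on \(\Gamma(G')\). The paper's remark preceding the conjecture makes the same point: \cite[Remark 3.6]{DSW} handles fixed \(r\) and \(k\), and one would need to ``push'' that argument to \(k\) bounded only by \(r|V(G)|\). Nothing in your proposal (or in the paper) supplies that bound, so your argument establishes only the conditional statement ``if arbitrarily fine subdivisions are never needed, then the problem is in NP and hence NP-complete.'' As a referee's summary of where the difficulty lies, your write-up is accurate and well organized; as a proof of the conjecture, it is incomplete, and the conjecture remains open.
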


\bibliographystyle{plain}

\begin{thebibliography}{10}

\bibitem{gonseq}
Ivan Aidun, Frances Dean, Ralph Morrison, Teresa Yu, and Julie Yuan.
\newblock Gonality sequences of graphs.
\newblock {\em SIAM J. Discrete Math.}, 35(2):814--839, 2021.

\bibitem{apx_cubic}
Paola Alimonti and Viggo Kann.
\newblock Some {APX}-completeness results for cubic graphs.
\newblock {\em Theoret. Comput. Sci.}, 237(1-2):123--134, 2000.

\bibitem{MB}
Matthew Baker.
\newblock Specialization of linear systems from curves to graphs.
\newblock {\em Algebra Number Theory}, 2(6):613--653, 2008.
\newblock With an appendix by Brian Conrad.

\bibitem{bn}
Matthew Baker and Serguei Norine.
\newblock Riemann-{R}och and {A}bel-{J}acobi theory on a finite graph.
\newblock {\em Adv. Math.}, 215(2):766--788, 2007.

\bibitem{stable_gonality_in_np}
Hans~L. Bodlaender, Marieke van~der Wegen, and Tom~C. van~der Zanden.
\newblock Stable divisorial gonality is in {NP}.
\newblock {\em Theory Comput. Syst.}, 65(2):428--440, 2021.

\bibitem{cdpr}
Filip Cools, Jan Draisma, Sam Payne, and Elina Robeva.
\newblock A tropical proof of the {B}rill-{N}oether theorem.
\newblock {\em Adv. Math.}, 230(2):759--776, 2012.

\bibitem{dhar}
Deepak Dhar.
\newblock Self-organized critical state of sandpile automaton models.
\newblock {\em Phys. Rev. Lett.}, 64(14):1613--1616, 1990.

\bibitem{small2020}
Marino Echavarria, Max Everett, Robin Huang, Liza Jacoby, Ralph Morrison, and
  Ben Weber.
\newblock On the scramble number of graphs.
\newblock {\em Discrete Applied Mathematics}, 310:43--59, 2022.

\bibitem{independent_planar}
M.~R. Garey and D.~S. Johnson.
\newblock The rectilinear {S}teiner tree problem is {NP}-complete.
\newblock {\em SIAM J. Appl. Math.}, 32(4):826--834, 1977.

\bibitem{gk}
Andreas Gathmann and Michael Kerber.
\newblock A riemann-roch theorem in tropical geometry.
\newblock {\em Mathematische Zeitschrift}, 259:217–230, 2007.

\bibitem{GSW}
Dion Gijswijt, Harry Smit, and Marieke van~der Wegen.
\newblock Computing graph gonality is hard.
\newblock {\em Discrete Appl. Math.}, 287:134--149, 2020.

\bibitem{mz}
Grigory Mikhalkin and Ilia Zharkov.
\newblock Tropical curves, their {J}acobians and theta functions.
\newblock In {\em Curves and abelian varieties}, volume 465 of {\em Contemp.
  Math.}, pages 203--230. Amer. Math. Soc., Providence, RI, 2008.

\bibitem{db}
Josse van Dobben~de Bruyn.
\newblock Reduced divisors and gonality in finite graphs.
\newblock Bachelor's thesis, Mathematisch Instituut, Universiteit Leiden, 2012.

\bibitem{JDBG}
Josse van Dobben~de Bruyn and Dion Gijswijt.
\newblock Treewidth is a lower bound on graph gonality.
\newblock {\em Algebr. Comb.}, 3(4):941--953, 2020.

\bibitem{DSW}
Josse van Dobben~de Bruyn, Harry Smit, and Marieke van~der Wegen.
\newblock Discrete and metric divisorial gonality can be different, 2021.

\end{thebibliography}

\end{document}